\newcommand{\Id}{\mathbf{1}}
\renewcommand{\>}{\rangle}
\newcommand{\<}{\langle}
\newcommand{\BB}{\operatorname{\mathcal{B}}}
\newcommand{\EE}{\operatorname{\mathcal{E}}}
\newcommand{\LL}{\operatorname{\mathcal{L}}}
\newcommand{\PP}{\operatorname{\mathfrak{P}}}
\newcommand{\eps}{\epsilon}
\newcommand{\CR}{\bar{\partial}}
\newcommand{\diag}{\operatorname{diag}}
\newcommand{\del}{\partial}
\newcommand{\IC}{\operatorname{\mathbb{C}}}
\newcommand{\IZ}{\operatorname{\mathbb{Z}}}
\newcommand{\IR}{\operatorname{\mathbb{R}}}
\newcommand{\IN}{\operatorname{\mathbb{N}}}
\newcommand{\IH}{\operatorname{\mathbb{H}}}
\newcommand{\IF}{\operatorname{\mathbb{F}}}
\renewcommand{\LL}{\operatorname{\mathcal{L}}}
\newtheorem{theorem}{Theorem}[section]
\newtheorem{proposition}[theorem]{Proposition}
\newtheorem{definition}[theorem]{Definition}
\newtheorem{lemma}[theorem]{Lemma}
\newtheorem{corollary}[theorem]{Corollary}
\newtheorem{remark}[theorem]{Remark}
\title{Infinite-dimensional\\ symplectic non-squeezing\\ using non-standard analysis}
\author{Oliver Fabert}
\thanks{O. Fabert, VU Amsterdam, The Netherlands. Email: o.fabert@vu.nl}
\begin{document}
\maketitle

\begin{abstract}
We prove a non-squeezing result for infinite-dimensional Hamiltonian flows using non-standard model theory. For this we prove the existence of a corresponding family of pseudoholomorphic spheres and characterize the maximal time in terms of a limiting behaviour for these curves. While our proof is based on the finite-dimensional results from Gromov's original proof, we do not ask for any prior knowledge of non-standard model theory.
\end{abstract}

\tableofcontents
\markboth{O. Fabert}{Infinite-dimensional non-squeezing} 

\section{Introduction and summary}

It is known that many evolutionary partial differential equations, such as the Kortweg-de Vries equation, the nonlinear Schr\"odinger equation, and the nonlinear wave equation, belong to the class of so-called \emph{Hamiltonian partial differential equations}, where we refer to \cite{K} for definitions, statements and further references. This means that they can be written in the form $\del_t u=X^H_t(u)$, where the Hamiltonian vector field $X^H=X^H_t$ is determined by the choice of a (time-dependent) Hamiltonian function $H=H_t$ and a linear symplectic form $\omega$ via $\omega(X^H_t,\cdot)=dH_t$. \\

Here a bilinear form $\omega:\IH\times\IH\to\IR$ on a real Hilbert space $\IH$ is called \emph{symplectic} if it is anti-symmetric and nondegenerate in the sense that the induced linear mapping $i_{\omega}:\IH\to\IH^*$ is an isomorphism. As in the finite-dimensional case it can be shown that for any symplectic form $\omega$ there exists a complex structure $J_0$ on $\IH$ such that $\omega$, $J_0$ and the inner product $\langle\cdot,\cdot\rangle$ on $\IH$ are related via $\langle\cdot,\cdot\rangle=\omega(\cdot,J_0\cdot)$.  \\

As an example consider the nonlinear Schr\"odinger equation on the circle $$i\del_t u \;+\; \Delta u \;-\; V(|u|^2,x,t)u\,=\, 0,$$ where $t\in\IR$ is the time coordinate, $x\in S^1=\IR/2\pi\IZ$ is the space coordinate with Laplace operator $\Delta=\del_x^2$, $u=u(t,x)\in\IC$ and $V$ is a smooth real-valued potential. Here it is easy to compute that it can be written as $$u_t\,=\, X^H_t(u),$$ where $X^H_t$ is the symplectic gradient of the (time-dependent) Hamiltonian function $$H_t(u)\,:=\, \frac{1}{2}\;\int_0^{2\pi} |\del_x u|^2 \;+\; f(|u|^2,x,t)\,dx$$ with respect to the symplectic form $\omega=\langle i\cdot,\cdot\rangle$ using the real inner product $\langle \cdot,\cdot\rangle$ on the Hilbert space $\IH=L^2(S^1,\IC)$ of square-integrable complex-valued functions on the circle, when $V$ is given by the derivative of $f$ with respect to the first coordinate.  \\

As the example of the nonlinear Schr\"odinger equation illustrates, it is a typical feature of Hamiltonian PDEs that the underlying Hamiltonian function is only defined on the dense subspace $\IH_0$ of $\IH$, $H=H_t:\IH_0\to\IR$. Since, in general, the Hamiltonian function will not be defined on the same function space where the bilinear from $\omega$ is symplectic, following \cite{K} one instead consider a nested sequence of separable Hilbert spaces $$\IH\supset\IH_0\supset\IH_1\supset\ldots\supset\IH_k\supset\ldots,$$ called a Hilbert scale, where one requires that each inclusion is dense and compact and we assume that $k\in\IN$ is related to the number of weak derivatives. \\

Since a Hamiltonian PDE can hence not be viewed as an ordinary differential equation on some Hilbert space, the Hamiltonian vector field $X^H_t$ typically does not possess a well-defined flow $\phi_t=\phi^H_t$ on the full symplectic Hilbert space $\IH$. While in general it is a very difficult task to prove the existence of such a flow, in this paper we want to focus on the case where $H_t:\IH\to\IR$ is a Hamiltonian function which is defined and smooth on the full Hilbert space $\IH$. Furthermore we want to assume that the Hamiltonian vector field is complete so that the flow is globally defined. \\

It is the goal of this paper to prove a non-squeezing result for infinite-dimensional Hamiltonian flows. Fix $R>0$ and denote by $\IC=\IR^2$ an arbitrary but fixed ($J_0$-)complex subspace of $\IH$ of real dimension $2$. Then $B^{\IH}(R)=\{x\in\IH: |x|< R\}$, $Z^{\IH}(R)=B^2(R)\times \IH/\IC$ denote the Hilbert ball and the Hilbert cylinder of radius $R>0$ (centered at $0\in\IH$), respectively. Using methods from non-standard analysis together with Gromov's original proof (\cite{Gr}) using pseudoholomorphic curves in finite dimensions, we prove the following   

\begin{theorem}\label{Main1}
For every smooth (time-dependent) Hamiltonian function $H:\IR\times \IH\to\IR$ on a separable symplectic Hilbert space there exists some positive time $T$ (characterized in terms of pseudoholomorphic curves) such that for all $t<T$ the flow map $\phi_t=\phi^H_t$ satisfies the natural analogue of Gromov's non-squeezing theorem:
If $\phi_t(v+B^{\IH}(r))\subset Z^{\IH}(R)$ for some $r>0$, $v\in\IH$, then $r\leq R$.
\end{theorem}

We emphasize that the time $T>0$ depends on the Hamiltonian $H$ as well as on the chosen complex subspace and on the radius of the zylinder. While it can be shown using an elementary proof that the Hamiltonian vector field $X^H_t$ of $H_t=H(t,\cdot)$ cannot be non-vanishing and pointing inward at every point on the boundary of the two-disk $B^2(R)\times\{0\}$, the latter is not sufficent to prove our statement. Most importantly, we actually give a geometrical characterization of the maximal time $T$ in terms of pseudoholomorphic spheres. \\

We start by outlining the natural infinite-dimensional generalization of the geometrical setup that Gromov used in his non-squeezing proof. First, note that for every $R>0$ and every $\epsilon>0$ there exists $\sigma>0$ such that there exist a symplectic embedding of $B^2(R)\subset\IC$ into the two-sphere $S^2=S^2(\sigma)$ of radius $\sigma$ with area $R^2\pi+\epsilon$. Let $J^{\IH}_t:=(\phi^{\IH}_t)_*J_0$ denote the family of compatible almost complex structures on $\IH$ obtained as pushforward of the complex structure $J_0$ on $\IH$ under the infinite-dimensional Hamiltonian flow $\phi^{\IH}_t=\phi_t$ on $\IH$. Using this homotopy together with a cut-off function to interpolate smoothly between $J^{\IH}_t$ on $\del B^2(R)$ and $J_0=J^{\IH}_0$, the push-forward compatible almost complex structure $J^{\IH}_t$ on $B^2(R)\times \IH/\IC\subset\IH$ can be extended to a compatible almost complex structure $\tilde{J}^{\IH}_t$ on $S^2(\sigma)\times\IH/\IC$. \\

In order to follow the strategy of Gromov's proof, the next step would be to prove, for every $t$, the existence of a $\tilde{J}^{\IH}_t$-holomorphic sphere $u=u_t$ in $S^2\times\IH/\IC$, that is, a map $u: S^2\to S^2\times\IH/\IC$ with $\CR_{\tilde{J}}(u)=Tu+\tilde{J}(u)\cdot Tu\cdot i=0$ for $\tilde{J}=\tilde{J}^{\IH}_t$, which furthermore passes through $p=p_t=\phi^{\IH}_t(0)\subset S^2\times\IH/\IC$ and is homotopic to $S^2\times\{0\}\subset S^2\times\IH/\IC$. Due to the lack of compactness in the infinite-dimensional case, we cannot expect that Gromov's result continues to hold in this form. In this paper, we are nonetheless able to prove the following  \\

\begin{theorem}\label{Main2} 
There exists $T>0$ and a $C^0$-continuous family $t\mapsto u_t$, $t\in [0,T)$ of smooth $\tilde{J}^{\IH}_t$-holomorphic spheres, where for $t=0$ the $\tilde{J}^{\IH}_0$-holomorphic curve is given by the inclusion $S^2\to S^2\times\{0\}\subset S^2\times \IH/\IC$. Furthermore, the maximal time $T$ is characterized by the following two alternatives: Either there exists a non-regular $\tilde{J}^{\IH}_T$-holomorphic sphere $u_{\infty}$ and a sequence $u_n=u_{t_n}$, $n\in\IN$ of $\tilde{J}^{\IH}_{t_n}$-holomorphic spheres with $t_n\to T$ as $n\to\infty$ which converge in the $C^0$-sense to $u_{\infty}$. Alternatively, there exists a sequence $u_n=u_{t_n}$, $n\in\IN$ of $\tilde{J}^{\IH}_{t_n}$-holomorphic spheres with $t_n\to T$ as $n\to\infty$ which has no $C^0$-convergent subsequence.
\end{theorem}

Here a $\tilde{J}$-holomorphic sphere $u$ is called non-regular if the linearization of the Cauchy-Riemann operator $\CR_{\tilde{J}}$ at $u$ has no bounded inverse. We remark that also in the finite-dimensional case one can only expect the analogous family of pseudoholomorphic spheres to exists for some maximal time which is now only characterized by the first alternative. Indeed, while it follows from Gromov compactness that there exists a compact one-dimensional moduli space when $t$ ranges between $0$ and $1$, the projection to the time interval $[0,1]$ has singular values, in general. On the other hand, as in finite dimensions every sequence of pseudoholomorphic spheres has a $C^0$-convergent subsequence, it follows that the second alternative does not apply. 

\begin{remark} We want to emphasize that, in order to prove the existence of the limiting non-regular pseudoholomorphic sphere, one already \emph{must exclude bubbling-off}, that is, use Gromov compactness. In essence our result proves that the lack of $C^0$-compactness is the only obstruction towards the generalization from finite to infinite dimensions. \end{remark}

Furthermore, we claim that with some extra work one could prove that the family is not just continuous but indeed smooth. \\

\noindent\emph{Non-squeezing in infinite dimensions}\\  

Before discussing the idea and details of the proof, let us first review some known results about the validity of non-squeezing in infinite dimensions: In the special case of semilinear Hamiltonian PDE where the flow is a smooth \emph{compact} perturbation of the linear flow of $A$, non-squeezing was already proven by Kuksin, see \cite{K}. This requires that the nonlinear part is not just smooth, but actually has a higher regularity, like e.g. a smoothing operator. Another result with regularity assumptions was established by Sukhov and Tumanov in \cite{ST}. Although most of the famous Hamiltonian PDE are not covered by the result in \cite{K}, non-squeezing was proven, for example, for the nonlinear Schr\"odinger equation and for the Korteweg-de Vries equation by Bourgain in \cite{Bo2} and by Colliander,  Keel, Staffilani, Takaoka and Tao in \cite{CKS+}, respectively. Both results rely on the fact that the corresponding infinite-dimensional flows can be uniformly approximated by finite-dimensional flows. Finally, in \cite{AM}, Abbondandolo and Majer proved non-squeezing for general infinite-dimensional symplectic flows under the assumption that the image of the unit ball stays convex, by defining an infinite-dimensional symplectic capacity. As main application they use their result to show that non-squeezing holds for symplectic flows for short times, under the assumption that there exist uniform bounds for the derivatives of $\phi_t$ on the Hilbert ball. Our result shows that the request for the uniform bounds in \cite{AM} can be dropped after restricting to the the case that the cylinder has a fixed radius and is centered at the origin. \\

\noindent\emph{A non-standard approach}\\

Apart from the fact that the non-squeezing problem in infinite dimensions has already received some attention over the recent years, the field of infinite-dimensional symplectic geometry is much less explored than its finite-dimensional counterpart. The main reason is that Gromov's theory of pseudoholomorphic curves does not generalize in an immediate way from finite-dimensional to infinite-dimensional symplectic manifolds. On the other hand, it is well-known that non-standard model theory, introduced by A. Robinson in his seminal book \cite{R}, provides a very efficient way to translate results from the finite to the infinite context, see e.g. the paper \cite{Os} on infinite-dimensional Brownian motion. We hope that this paper as well as our second paper \cite{F} on Floer homology will serve as a starting point of a general program to generalize results from finite-dimensional symplectic geometry to infinite dimensions using this tool. While a standard approach would require to prove appropriate infinite-dimensional generalizations of every technical result, starting from the existence of Banach space bundles over Banach manifolds all the way to bubbling-off and elliptic bootstrapping, our non-standard proofs only build on the well-established finite-dimensional results. In the case of non-squeezing, note that this includes the monotonicity theorem for minimal surfaces. \\

Indeed it is well-known, see e.g. \cite{L1}, \cite{L2} and \cite{Ke}, that there exist so-called \emph{non-standard} models of mathematics in which there exists an extension of the notion of finiteness: There exist new so-called unlimited *-real (and *-natural) numbers which are greater than all \emph{standard} real (and natural) numbers; in an analogous way there exist infinitesimal numbers, whose moduli are smaller than any positive standard real number. These *-real numbers can be introduced, using the axiom of choice, as equivalence classes of sequences of real numbers, where the standard numbers are included as constant sequences, while sequences converging to $\pm\infty$ or $0$ are examples of unlimited and infinitesimal numbers, respectively. In this paper we will use the resulting surprising fact that there exists a *-finite-dimensional symplectic space $\IF$, i.e., a finite-dimensional symplectic space in the sense of the non-standard model, which contains the infinite-dimensional Hilbert space $\IH$ as a subspace. Furthermore, on $\IH\subset\IF$, the smooth infinite-dimensional Hamiltonian flow $\phi_t=\phi^{\IH}_t$ can be represented by a *-finite-dimensional Hamiltonian flow $\phi^{\IF}_t$, that is, they agree up to an error which is smaller than any (standard) positive real number. While the existence of ideal elements such as $\IF$ (and of the flow $\phi^{\IF}_t$ on it) is established using the so-called \emph{saturation principle}, the existence of pseudoholomorphic curves in $\IH$ (more precisely, in $S^2\times\IH/\IC$) is then deduced from the classical finite-dimensional theory by employing the second fundamental principle of non-standard model theory, the so-called \emph{transfer principle}. This principles states that every statement that holds in finite dimensions and can be formulated in first-order logic has an analogue in the *-finite-dimensional setting. While this immediately allows us to deduce the existence of pseudoholomorphic curves in $\IF$ (more precisely, in $^*S^2\times\IF/{^*\IC}$, which contains $S^2\times\IH/\IC$ as a subset) for the almost complex structure $\tilde{J}^{\IF}_t$ defined using the *-finite-dimensional Hamiltonian flow $\phi^{\IF}_t$, we essentially use the minimal surface property of pseudoholomorphic curves and bubbling-off analysis (in finite dimensions) to prove that these non-standard $\tilde{J}^{\IF}$-holomorphic spheres actually give pseudoholomorphic spheres (in the standard meaning) for the almost complex structure $\tilde{J}^{\IH}$ on $S^2\times\IH/\IC$ defined using the infinite-dimensional flow $\phi^{\IH}_t$. \\ 

This paper is organized as follows: While in section two we first describe what we mean by the standard model of mathematics, using that all appearing mathematical entities are obtained by successively taking sets of sets starting from the real numbers, in section three we introduce all necessary concepts, definitions and theorems from non-standard model theory that are needed to follow the proof of our non-squeezing theorem. In section four we prove the existence of the *-finite-dimensional symplectic space $\IF$ containing $\IH$ and the fact that the infinite-dimensional flow $\phi^{\IH}_t$ on $\IH$ can be represented with all derivatives by the *-finite-dimensional flow $\phi^{\IF}_t$ which, together with the first two sections, are written in order to serve as foundation for future papers on infinite-dimensional Hamiltonian dynamics using non-standard methods. In section five we outline how non-squeezing in infinite dimensions can be proven using this *-finite-dimensional representation. While the monotonicity theorem holds directly by the transfer principle, the key ingredient is the proof of theorem \ref{Main2}, where an outline of the proof as well as the key lemmas are given in section six. In section seven and eight we finally give detailed proofs of these key lemmas and complete the proof in section eight. \emph{We would like to emphasize that this paper is written in such a way that it does not require any previous knowledge about non-standard model theory.}\\ 

The idea for this project came up after listing to Alberto Abbondandolo's talk on his joint work \cite{AM} at VU Amsterdam in December 2014.  Further I am deeply grateful to Horst Osswald from LMU Munich for introducing me to the fascinating world of non-standard analysis during my time as undergraduate student.

\section{The standard model}

In this section we provide an outline of all the background and relevant definitions and statements about nonstandard analysis that the reader needs to know in order to follow the rest of the paper. Here we describe the original model-theoretic approach of Robinson (\cite{R}), outlined in the excellent expositions \cite{L1}, \cite{L2} as well as in \cite{Ke}, to which we refer and which shall also be consulted for more details and background. \\

Believing in the axiom of choice it is well-known, see e.g. (\cite{L2}, theorem 2.9.10), that there exist non-standard models of mathematics in which, on one side, one can do the same mathematics as before (transfer principle) but, on the other side, all sets behave like compact sets (saturation principle). The idea is to successively introduce new ideal objects such as infinitely small and large numbers. The proof of existence of the resulting polysatured model is then performed in complete analogy to the proof of the statement that every field has an algebraic closure, by employing the axiom of choice. \\

A model of mathematics $V$ of a family of sets which is rich enough in order to do all the mathematics that one has in mind. Since for existence proof of non-standard models it is crucial that $V$ is still a set in the sense of set theory, there are (abstract) sets which are not in $V$. Below we show how to define such a set $V$ which contains all mathematical entities that we need for our proof. For most of the upcoming definitions and theorems on the general background on model theory we refer the reader to \cite{L2} as well as \cite{Ke}. The first definition is taken from the appendix in (\cite{L2}, section 2.9). 

\begin{definition} A sequence $V=(V_n)_{n\in\IN}$ of hierarchically ordered sets $V_n$, $n\in\IN$ is called a \emph{model} if the elements in $V_n$ are sets formed from the elements in $V_0,\ldots,V_{n-1}$, i.e.,  $V_n\subset\PP(V_0\cup\ldots\cup V_{n-1})$ and $V_0$, called the set of urelements, does not contains elements from higher sets, i.e., $V_0\cap\bigcup_{n\geq 1} V_n=\emptyset$. \end{definition}

By choosing the model $V=(V_n)_{n\in\IN}$ large enough, one can ensure that the models contains all mathematical entities that one wants to work with. Apart from assuming that every subset formed from elements in $V_0,\ldots,V_{n-1}$ is in $V_n$, below we show explicitly that for our proof it turns out to be sufficient to take the real numbers as urelements, i.e., $V_0=\IR$.

\begin{definition} We call $V=(V_n)_{n\in\IN}$ the \emph{standard model} if the urelements are the real numbers, $V_0=\IR$, and the model is full in the sense that $V_n=\PP(V_0\cup\ldots\cup V_{n-1})$. \end{definition}

In what follows, let $V=(V_n)_{n\in\IN}$ denote the standard model. As discussed in (\cite{L2}, 2.9), it follows that $$V(\IR)=\bigcup_{n=0}^{\infty} V_n(\IR)\;\;\textrm{with}\;\; V_n(\IR)=\IR\cup V_n\;\;\textrm{for all}\;\;n\in\IN$$ is the superstructure over the real numbers in the sense of (\cite{L2}, definition 2.1.1) and (\cite{Ke}, definition 15.4). Note that, for $n>1$, we have for every full model that $V_{n-1}\subset V_n$. \\

Since in analysis one considers sets of functions which themselves can be viewed as sets built from the real numbers, the superstructure over the real numbers contains all mathematical entities that one needs to do analysis, see (\cite{Ke}, section 15B). In particular we claim 

\begin{proposition} The standard model $V=(V_n)_{n\in\IN}$ contains (isomorphic copies of) all mathematical entities that we need in order to formulate and prove our non-squeezing theorem. 
\end{proposition}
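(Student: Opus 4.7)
The plan is to go through each ingredient that enters the statement of the non-squeezing theorem and its proof, and to verify --- once and for all --- that it admits a set-theoretic encoding using only finitely many iterated power sets starting from $\IR$. Since $V_0=\IR$ and $V_n=\PP(V_0\cup\ldots\cup V_{n-1})$, anything that can be built from real numbers by forming a bounded number of ordered pairs, graphs, subsets and function spaces will lie in some $V_n$, and hence in $V(\IR)=\bigcup_n V_n(\IR)$. I would adopt the standard foundational conventions: ordered pairs as Kuratowski pairs, functions as their graphs, Cartesian products as sets of ordered pairs, and the inclusions $\IN,\IZ,\IQ\subset\IR$ realized directly; then $\IR^n$, and in particular $\IC$, the sphere $S^2\subset\IR^3$, and the Hilbert ball $B^{\IH}(r)$ all sit at low rank above $\IR$.

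Next I would verify the infinite-dimensional objects. The Hilbert space $\IH=L^2(S^1,\IC)$ is a set of equivalence classes of measurable functions $S^1\to\IC$, each such function being a subset of $S^1\times\IC$; the whole Hilbert scale $\IH\supset\IH_0\supset\IH_1\supset\ldots$ fits into the same finite rank. The symplectic form $\omega\colon\IH\times\IH\to\IR$, the inner product, the compatible complex structure $J_0$, the quotient $\IH/\IC$, the cylinder $Z^{\IH}(R)$, the product $S^2\times\IH/\IC$ together with its tangent bundle, time-dependent Hamiltonians $H_t\colon\IH\to\IR$, their flows $\phi_t\colon\IH\to\IH$, and the almost complex structures $\tilde{J}^{\IH}_t$ are all assembled from these data using ordered pairs, function graphs and subsets, and hence remain at some finite level of the superstructure. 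The same holds for the moduli spaces of pseudo-holomorphic maps $u\colon S^2\to S^2\times\IH/\IC$ that appear in the actual argument, since each map is a set of pairs $(z,u(z))$.

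The only subtle point --- and essentially the sole obstacle --- is to check that no step of the intended proof forces us out of $V(\IR)$: that is, to confirm that no object we manipulate has genuinely unbounded set-theoretic rank, and that we never need to quantify over the proper class of all sets. This is where the careful organisation of the introduction pays off: every family of objects we consider (sequences of finite-dimensional approximations, *-finite-dimensional extensions, perturbed almost complex structures, Cauchy--Riemann operators and their solutions) is indexed by a set that is itself already present in $V(\IR)$, and each individual member of such a family consumes only a bounded number of power-set levels. Since the proposition asserts only the presence of \emph{isomorphic copies}, we are furthermore free to pick any convenient encoding. The verification is therefore essentially bookkeeping, and I would conclude by remarking that this is precisely the level of generality that is needed for the later application of the transfer and saturation principles, which both refer to the standard model $V$ that has just been shown to be rich enough.
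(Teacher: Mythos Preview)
Your approach is correct and is essentially the same as the paper's: both arguments amount to the bookkeeping exercise of checking that every object used can be encoded at some bounded level of the superstructure over $\IR$, using Kuratowski pairs and functions-as-graphs. The paper is simply more explicit, stating the recipe ``if $a,b\in V_n$ then any $f:a\to b$ lies in $V_{n+2}$'' and then assigning concrete level indices ($\IR^{2n}\in V_4$, $\IH\cong\ell^2\in V_4$, almost complex structures in $V_9$, one-parameter families in $V_{12}$, etc.), and it opts for the sequence-space encoding $\IH\cong\ell^2$ rather than your $L^2(S^1,\IC)$, which is a bit cleaner but immaterial since only an isomorphic copy is required. One small slip: you list ``*-finite-dimensional extensions'' among the objects to be located in $V(\IR)$, but those live in the non-standard model $W$, not in $V$; the proposition concerns only the standard entities, and the non-standard ones enter later via the $\ast$-map.
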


\begin{proof} Instead of trying to give a proof listing all mathematical entities that will ever occur, we rather give the recipe and discuss the most important examples. This said, the proof of our proposition mostly relies on the following observation: \\

\emph{If $a$ and $b$ are sets in $V_n$, then every function $f:a\to b$ is an element of $V_{n+2}$ and every set of functions $f:a\to b$ is an element of $V_{n+3}$.} \\ 

For this it suffices to observe that, in set theory, a function $f:a\to b$ is identified with the subset $\{(x,f(x)): x\in a\}$ of $a\times b$ and for each $x\in a$, $y\in b$ the tuple $(x,y)$ is defined as the set $\{x,\{x,y\}\}$. \\ 

In order to see that all mathematical entities that we need are in the standard model, let us give the most relevant examples. 
\begin{itemize}
\item First, since $V_0=\IR$, it follows that $\IR$ as well as all its subsets like $\IN$, $\IZ$ are elements in $V_1$.
\item Since tuples $(r_1,\ldots,r_n)$ of real numbers can be written as sets of tuples $\{(r_1,1),\ldots,(r_n,n)\}$, we see that they are elements in $V_3$ and hence $\IR^{2n}\cong\IC^n$ belongs to $V_4$. The same holds true for the subsets $B^{2n}(r)$ and $Z^{2n}(R)$ and all other subsets.
\item By identifying each separable real Hilbert space $\IH$ with the space $\ell^2=\ell^2(\IR)$ of square-summable series of real numbers (using a choice of a countable complete basis) and using that every series of real numbers is given by a function $f:\IN\to\IR$, it follows that all elements in $\IH$ are elements in $V_3$ and $\IH$ as well as all its subsets like $\IH_1$ are elements in $V_4$. 
\item Every Hamiltonian function $H$ and its symplectic gradient $X^H$ as well as every linear complex structure belong to $V_6$. \item Since the dual spaces of the linear spaces living in $V_4$ consists of functions, they hence belong to $V_7$.
\item A linear symplectic form is an element in $V_9$. For this it does not matter whether we view it as a linear map from the linear space to its dual space ($\omega:\IH\to\IH^*$) or as a bilinear map on the linear space ($\omega:\IR^{2n}\times\IR^{2n}\to\IR$, using that $\IR^{2n}\times\IR^{2n}\in V_7$).
\item An almost complex structure $J$, i.e., a complex structure on the tangent bundle, is a function from the space to the set of linear complex structures. Since the latter is an element of $V_7$, it follows that $J$ is again an element of $V_9$.
\item Since the space of almost complex structures belongs to $V_{10}$, it follows that every one-parameter family $\tilde{J}: t\mapsto J_t$ of almost complex structures is an element of $V_{12}$. 
\item Every $J$-holomorphic map $u$ is a map between sets belonging to $V_4$ and hence belongs to $V_6$. Note that, although the defining almost complex structure only appears in $V_n$ for $n\geq 9$, the moduli space is a subset of the set of all maps and hence, by fullness of the model, is also an element in $V_7$. 
\end{itemize}\end{proof}

In order to show that Gromov's existence result of $J$-holomorphic curves indeed continues to hold in infinite dimensions, we will use that, by abstract model theory, his statement also holds in the non-standard model which we are going to discuss below. To make the underlying transfer principle precise, we quickly recall all the necessary background from first-order predicate logic that is needed. \\

The idea is that, just like all mathematical entities that we need are contained in the standard model $V=(V_n)_{n\in\IN}$, all statements that we will transfer can be formalized in first-order logic, that is, they are sentences in the language $\LL_V$ for our standard model $V$. In the same way as the details in the precise definition of models are not ultimatively important in order to understand the strategy of our proof, we continue to recall all needed foundations from logic for the sake of completeness of the exposition. For the following definitions we continue to refer to the appendix in (\cite{L2}, section 2.9) as well as (\cite{Ke}, section 15B).

\begin{definition} The \emph{alphabet} of the \emph{language} $\LL_V$ of the model $V=(V_n)_{n\in\IN}$ consists of the logical symbols $\vee$, $\neg$, $\exists$, $=$, $\in$, a countable number of variables, the elements in $V_{<\infty}:=\bigcup_{n\in\IN} V_n$ as parameters, and auxiliary symbols like parentheses. 
\end{definition}

\begin{definition} A \emph{sentence} in the \emph{language} $\LL_V$ of the model $V=(V_n)_{n\in\IN}$ is build inductively from the following rules: \begin{itemize}
\item[i)] If $a,b\in V_{<\infty}$, then $a\in b$ and $a=b$ are sentences in $\LL_V$.
\item[ii)] If $A$ and $B$ are sentences in $\LL_V$, then $A\vee B$ and $\neg A$ are sentences in $\LL_V$.
\item[iii)] Let $A$ be a sentence in $\LL_V$ and $a,b\in V_{<\infty}$ are parameters in $\LL_V$. If $x$ is a variable not occurring in $A$, then $\exists x\in a A_b(x)$ is a sentence in $\LL_V$, where $A_b(x)$ is obtained from $A$ by replacing each occurrence of the parameter $b$ in $A$ by the variable $x$. 
\end{itemize}
Every $A(x)=A_b(x)$ as in part iii) with a free variable $x$ is called a \emph{formula} in $\LL_V$. Furthermore, for every parameter $a\in V_{<\infty}$, by $A(x)(a)$ we denote the new sentence in $\LL_V$ obtained by replacing the variable $x$ by the parameter $a$. 
\end{definition}

Whether a sentence $A$ holds true in the model $V$, written $V\models A$, is decided using the usual interpretation for sentences in set theory, see (\cite{L2}, 2.9), (\cite{Ke}, 15B). 

\section{The non-standard model}

Using the axiom of choice one can prove that there exists a so-called \emph{non-standard model} in which the same mathematics hold true but in which every set from $V$ can be viewed as a precompact set. More precisely, after reformulating (\cite{L2}, theorem 2.9.10), we have the following 

\begin{theorem}\label{non-standard model}
Given the standard model $V=(V_n)_{n\in\IN}$ there exists a corresponding non-standard model $W=(W_n)_{n\in\IN}$, together with an embedding $\ast: V_{<\infty}\rightarrow W_{<\infty}$ respecting the filtration, i.e. $\ast_n: V_n\rightarrow W_n$, satisfying the following two important principles. \\
\begin{itemize}
\item\emph{Transfer principle:} If a sentence $A$ holds in the language $\LL_V$ of the model $V$, $V\models A$, then the corresponding sentence ${^*}A$, obtained by replacing the parameters from $V$ by their images in $W$ under $\ast$, holds in the language $\LL_W$ of the model $W$, $W\models {^*}A$. \\
\item\emph{Saturation principle:} If $(a_i)_{i\in I}$ is a collection of sets in $W$, indexed by a set $I$ in $V$, and satisfying $a_{i_1}\cap\ldots\cap a_{i_n}\neq\emptyset$ for all $i_1,\cdots,i_n\in b$, $n\in\IN$ \emph{(finite intersection property)}, then also the common intersection of all $a_i$, $i\in I$ is non-empty, $\bigcap_{i\in I} a_i\neq\emptyset$. \\
\end{itemize}
\end{theorem}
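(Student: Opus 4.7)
The plan is to construct $W$ as a bounded ultrapower of $V$ and deduce both principles from standard ultrafilter combinatorics. First I would fix an index set $I$ of cardinality at least as large as any set from $V_{<\infty}$ that will later index a family in the saturation principle, and then invoke Zorn's lemma to extend the Fr\'echet filter on $I$ to an ultrafilter $\mathcal{U}$ that is countably incomplete and $\kappa^+$-good for a sufficiently large cardinal $\kappa$. The existence of such good ultrafilters is the Keisler--Kunen theorem, whose proof is a transfinite diagonal construction that is precisely the analogue, in this setting, of the tower of algebraic extensions used to build an algebraic closure, as alluded to in the previous section.

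Next I would build the model $W=(W_n)_{n\in\IN}$ level by level. Set $W_0=V_0^I/\mathcal{U}$, and for $n\geq 1$ let $W_n$ consist of equivalence classes $[(a_i)_{i\in I}]$ of sequences with $a_i\in V_n$ modulo $\mathcal{U}$-almost-everywhere equality; internal membership is defined by $[a]\in^*[b]$ iff $\{i\in I:a_i\in b_i\}\in\mathcal{U}$, and internal equality analogously. The embedding $*:V_{<\infty}\to W_{<\infty}$ sends $a\in V_n$ to the class of the constant sequence $(a,a,\ldots)$; this lands in $W_n$ so that the filtration is respected and the restriction $*_n:V_n\to W_n$ is well-defined.

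The transfer principle is then the fundamental theorem of ultraproducts applied to this bounded ultrapower: by induction on the inductive construction of sentences in $\LL_V$ one shows that for every sentence $A$ one has $W\models{^*A}$ iff $\{i\in I:V\models A\}\in\mathcal{U}$, so that every sentence valid in $V$ automatically transfers to $W$. The only step requiring more than a routine verification is the existential clause, where one must select a witnessing sequence of representatives on a $\mathcal{U}$-large subset of $I$ using the axiom of choice, exploiting crucially the fact that the quantifiers in $\LL_V$ are all bounded $\exists x\in a$, so that the witnesses stay within a fixed level of the hierarchy.

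The main obstacle is the saturation principle. Given a family $(a_i)_{i\in I_0}$ in $W$ indexed by a set $I_0\in V$ and satisfying the finite intersection property, one must produce a single element of $W$ lying in every $a_i$ simultaneously. This is exactly $|I_0|^+$-saturation, and it is here that the $\kappa^+$-goodness of $\mathcal{U}$ is essential: one lifts each $a_i$ to a sequence of representatives and then uses the combinatorial properties of a good ultrafilter to diagonalise across $I$, producing a coherent sequence $(c_i)_{i\in I}$ whose class lies in every $a_i$. The reason an arbitrary ultrafilter does not suffice is that countable incompleteness alone yields only countable saturation, whereas the saturation principle quantifies over arbitrary index sets $I_0\in V$; accordingly the cardinal $\kappa$ must be chosen at the outset large enough to handle every index set from $V$ that will be relevant to the application, in particular those arising from the Hilbert space $\IH$ and its associated function spaces and parameter families.
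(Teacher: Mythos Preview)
Your construction is correct and is in fact one of the standard routes to polysaturated nonstandard extensions: build the bounded ultrapower of the superstructure with respect to a $\kappa^+$-good countably incomplete ultrafilter, obtain transfer from \L o\'s, and obtain saturation from Keisler's theorem on good ultrafilters. The only technical step you leave implicit is the Mostowski-type collapse that turns the ultrapower (a structure with an interpreted membership relation) into an honest hierarchy of sets $W_n\subset\PP(W_0\cup\ldots\cup W_{n-1})$ as required by the paper's definition of ``model''; this is routine and well known.

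The paper's own proof, however, takes a quite different route: it does not construct $W$ at all but simply invokes the existence theorem from the reference \cite{L2} (their theorem 2.9.10), and then spends its effort reconciling terminology---explaining why the monomorphism property of \cite{L2} is the transfer principle, why their formulation of saturation matches the finite-intersection-property version stated here, and, most importantly, why one should set $W_n:={^*}V_n$ so that every set in $W$ is automatically internal (which is what makes the saturation hypothesis apply without further restriction). Your construction delivers this last point automatically, since the equivalence classes $[(a_i)]$ with $a_i\in V_n$ are precisely the internal sets of level $n$, but you do not flag it explicitly. In short: you actually build the model, while the paper outsources the construction and concentrates on matching definitions; both are legitimate, and yours is closer to a self-contained argument.
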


\begin{proof} Since in the references the theorem is not precisely stated in the above form, let us quickly describe how it can be deduced from \cite{L2}. In (\cite{L2}, theorem 2.9.10) it is claimed that there exists a so-called monomorphism from the superstructure $V(\IR)$ over $\IR$ into the superstructure $V({^*}\IR)$ over the set ${^*}\IR$ of non-standard real numbers. The latter are defined explicitly as equivalence classes of sequences of real numbers using the axiom of choice in (\cite{L2}, definition 1.2.3). Note that by (\cite{L2}, definition 2.4.3 and remark 2.4.4) the property of the map $\ast: V(\IR)\to V({^*}\IR)$ being a monomorphism is equivalent to the transfer principle, in particular, the latter indeed implies that $\ast$ respects the filtration. On the other hand, the fact that the formulation of the saturation principle given here is equivalent to the definition in (\cite{L2}, definition 2.9.1) is proven in (\cite{L2}, theorem 2.9.4), noticing that, by the definition of the cardinal number $\kappa^+$ appearing in (\cite{L2}, theorem 2.9.10), every set in $V$ is $\kappa^+$-small. Since the saturation property is only assumed when all sets $a_i$, $i\in V$ are \emph{internal} in the sense of (\cite{L2}, definition 2.8.1), that is, when they are elements in the $\ast$-image ${^*}V_n(\IR)\subset V_n({^*}\IR)$ of the set $V_n(\IR)\in V_{n+1}(\IR)$, we follow the strategy in the appendix of (\cite{L2}, section 2.9) and define the non-standard model $W=(W_n)_{n\in\IN}$ by setting $W_n:={^*}V_n$ for all $n\in\IN$. In particular, every set in the non-standard model $W=(W_n)_{n\in\IN}$ is internal.      
\end{proof}

In what follows we follow the usual conventions and write ${^*}a:=\ast(a)$ for every set $a\in V_{<\infty}\backslash V_0$ and identify $a:=\ast(a)$ for every urelement $a\in V_0=\IR$. \\
 
\begin{definition} A set $a$ is called
\begin{itemize}
\item[i)] \emph{internal} if $a\in W_{<\infty}$,
\item[ii)] \emph{standard} if $a={^*}b:=\ast(b)\in W_{<\infty}$ for some $b\in V_{<\infty}$.
\item[iii)]\emph{external} if $a$ is not internal.
\end{itemize} 
\end{definition}

We start with some immediate consequences of the transfer principle, see (\cite{L2}, proposition 2.4.6).

\begin{proposition} Let $a,b$ be sets in $V_{<\infty}$. Then we have
\begin{itemize}
\item[i)] $a=b$ if and only if ${^*}a={^*}b$,
\item[ii)] $a\in b$ if and only if ${^*}a\in {^*}b$,
\item[iii)] $a\subset b$ if and only if ${^*}a\subset {^*}b$,
\item[iv)] $f:a\to b$ if and only if ${^*}f: {^*}a\to {^*}b$.
\end{itemize}
\end{proposition}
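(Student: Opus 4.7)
The plan is to reduce each of the four equivalences to a direct application of the transfer principle from Theorem~\ref{non-standard model}. For each item, I will exhibit a sentence $A$ of $\LL_V$ with parameters taken from $V_{<\infty}$ such that the meaning of $A$ in $V$ is the left-hand side, while the meaning of the transferred sentence ${^*}A$ in $W$ (obtained by replacing each parameter $c$ by ${^*}c$) is the right-hand side. One direction of each ``if and only if'' then comes from applying transfer to $A$; the reverse direction comes from applying transfer to $\neg A$, which is again a sentence of $\LL_V$ by the second formation rule, noting that ${^*}(\neg A)$ agrees with $\neg({^*}A)$ since the $\ast$-map only acts on parameters.

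For (i) and (ii), the conditions $a=b$ and $a\in b$ are already atomic sentences of $\LL_V$ by the first formation rule, with transfers ${^*}a={^*}b$ and ${^*}a\in{^*}b$ respectively, so there is nothing more to check. For (iii), I rewrite inclusion as the bounded sentence $\neg\exists x\in a\,\neg(x\in b)$, which is in $\LL_V$ by the second and third formation rules; its ${^*}$-image $\neg\exists x\in {^*}a\,\neg(x\in {^*}b)$ expresses precisely ${^*}a\subset {^*}b$.

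For (iv), I unfold ``$f:a\to b$'' into its set-theoretic content using the convention $(x,y)=\{x,\{x,y\}\}$ recalled in the proof of the earlier proposition: $f$ is a subset of $a\times b$ such that for every $x\in a$ there exists a unique $y\in b$ with $(x,y)\in f$. Using only the primitive symbols $\in$, $=$, $\neg$, $\vee$ and bounded existential quantifiers over the parameters $a$, $b$, $f$ (with $\wedge$ and $\forall$ defined from the primitive connectives in the usual way), this unpacks to a sentence $A$ of $\LL_V$; the only bookkeeping is expressing uniqueness as $\exists y\in b\,(\varphi(y)\wedge\neg\exists y'\in b\,(\varphi(y')\wedge\neg(y=y')))$. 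Applying transfer to $A$ and to $\neg A$ then gives (iv).

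I expect no real obstacle here: the proposition is essentially a verification that the basic set-theoretic predicates $=$, $\in$, $\subset$ and ``is a function from\dots to\dots'' all fit within the first-order formation rules of $\LL_V$, and the transfer principle supplies the rest. The only mild subtlety worth flagging is that the biconditional genuinely requires closure of $\LL_V$ under negation, which is why the argument would break down if one tried to run it inside a purely positive fragment of the language.
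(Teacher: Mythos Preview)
Your proposal is correct and is exactly the standard argument: each assertion is a sentence of $\LL_V$ (atomic for (i) and (ii), a bounded universal for (iii), and a first-order unpacking of the set-theoretic definition of a function for (iv)), and transfer applied to both $A$ and $\neg A$ yields the biconditional. The paper itself does not give a proof of this proposition at all; it simply records the statement as an immediate consequence of the transfer principle and refers the reader to (\cite{L2}, proposition 2.4.6), so your explicit verification is in fact more detailed than what the paper provides and matches what one finds in the cited reference.
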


These in turn lead to the following 
\begin{corollary}\label{extension} It follows  
\begin{itemize}
\item[i)] $\ast: V_{<\infty}\to W_{<\infty}$ is an embedding.
\item[ii)] For every set $b\in V_{<\infty}$ we have that  ${^*}[b]:=\{{^*}a: a\in b\}\subset {^*}b$. 
\item[iii)] For every function $f:a\to b$ we have that ${^*}f: {^*}a\to {^*}b$ is an extension of $f$ in the sense that for all $c\in a$ we have ${^*}(f(c))=({^*}f)({^*}c)\in {^*}b$.
\end{itemize}
\end{corollary}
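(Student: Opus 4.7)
The three parts all flow directly from the preceding proposition by purely formal manipulation, so my plan is to unpack each item and check which clause of the proposition does the work.

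For part (i), the point is simply that ``embedding'' here amounts to injectivity, since $\ast$ is already given as a function on $V_{<\infty}$ respecting the filtration. Injectivity is immediate from clause (i) of the proposition: if ${^*}a = {^*}b$, then $a = b$. I would dispatch this in one line.

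For part (ii), the plan is to take any element of ${^*}[b]$, i.e.\ one of the form ${^*}a$ with $a \in b$, and apply clause (ii) of the proposition to conclude ${^*}a \in {^*}b$. The containment ${^*}[b] \subset {^*}b$ is then immediate. It is worth remarking---though not needed for the proof---that this inclusion is generally strict, as ${^*}b$ will in addition contain internal but non-standard elements.

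For part (iii), which carries the only real content, I would proceed in two steps. First, clause (iv) of the proposition already gives that ${^*}f: {^*}a \to {^*}b$ is a function. What remains is to show ${^*}(f(c)) = ({^*}f)({^*}c)$ for each $c \in a$. For this I would use the set-theoretic definition of a function as its graph, together with the Kuratowski pair $(x,y) = \{x, \{x,y\}\}$. The sentence ``$(c, f(c)) \in f$'' is a true sentence of $\LL_V$, so by the transfer principle the corresponding sentence ``$({^*}c, {^*}(f(c))) \in {^*}f$'' holds in $W$; here one uses the observation that $\ast$ commutes with the formation of Kuratowski pairs, which is itself an immediate transfer of the defining identity $(x,y) = \{x,\{x,y\}\}$ (and of the fact that $\ast$ respects the two-element set constructor, obtainable by transferring ``$z \in \{x,y\} \leftrightarrow (z=x \vee z=y)$''). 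Combined with the fact that ${^*}f$ is a function and hence single-valued at ${^*}c$, this forces $({^*}f)({^*}c) = {^*}(f(c))$, and membership in ${^*}b$ is then part (iv) of the proposition applied to the value.

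The only minor obstacle is the bookkeeping point in (iii) that $\ast$ respects ordered pairs, but this is itself a one-line transfer argument. Beyond that, everything reduces to transferring the sentences ``$f(c)=d$'' and ``$f$ is a function,'' so the proof is essentially formal and contains no genuine difficulty.
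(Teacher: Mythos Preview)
Your proposal is correct and matches the paper's approach: the paper gives no explicit proof of this corollary, treating it as an immediate consequence of the preceding proposition (``These in turn lead to the following''). You have simply unpacked the formal steps---injectivity from clause (i), the inclusion from clause (ii), and the extension property from clause (iv) together with transfer of the graph-of-a-function description---exactly as intended.
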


\noindent\textbf{Examples:} 
\begin{itemize}
\item[i)] Since $+$ is a function from $\IR\times\IR$ to $\IR$, it follows that ${^*}+$ is a function from ${^*}\IR\times {^*}\IR$ to ${^*}\IR$ with ${^*}r {^*}+ {^*}s = {^*}(r+s)$ for all $a,b\in\IR$. 
\item[ii)] Since the symplectic form $\omega$ on $\IH$ is a map from $\IH\times\IH$ to $\IR$, its $\ast$-image $^*\omega$ is a map from ${^*\IH}\times{^*\IH}$ to $^*\IR$ which agrees with $\omega$ on $\IH\times\IH\subset{^*\IH}\times{^*\IH}$. Analogous statements hold true for the inner product $\<\cdot,\cdot\>$  and the complex structure $J_0$ on $\IH$. 
\item[iii)] Since, for all $n\in\IN$ and all $k\in\IN$, we know that $\sum_{i=1}^k$ is a function from $(\IR^n)^k$ to $\IR^n$, it follows that, now even for all $n\in {^*}\IN$ and all $k\in{^*\IN}$, ${^*}\sum_{i=1}^k$ is a function from $({^*}\IR^n)^k$ to ${^*}\IR^n$ with ${^*}\sum_{i=1}^k {^*}r_i={^*}\sum_{i=1}^{k-1} r_i + r_k$ for all $k\in{^*\IN}$. 
\item[iv)] Since every sequence $s=(s_n)_{n\in\IN}$ of natural numbers is a function from $\IN$ to $\IR$, it follows that its $\ast$-image ${^*}s$ is a function from ${^*}\IN$ to ${^*}\IR$ with ${^*}s_n= {^*}(s_n)$ for all $n\in\IN$. \\ 
\end{itemize}

We make the following \\

\noindent\textbf{Convention:} \emph{If no confusion is likely to arise, we make the convention to identify each standard set $b\in V_{<\infty}$ with ${^*}[b]\subset {^*}b\in W_{<\infty}$. In particular, we have $\IR\subset {^*}\IR$ and $\IN\subset {^*}\IN$.} \\

Indeed it is true that the saturation principle implies that the non-standard model $W=(W_n)_{n\in\IN}$ is (much) larger than the standard model $V=(V_n)_{n\in\IN}$. For the next statement we refer to (\cite{L2}, proposition 2.4.6) and (\cite{L2}, proposition 2.9.7).

\begin{proposition} We have the following dichotomy:
\begin{itemize}
\item[i)] If $b\in V_{<\infty}$ has finitely many elements, then its $\ast$-image ${^*}b\in W_{<\infty}$ consists of the $\ast$-images of its elements, $$^*\{a_1,\ldots,a_n\}=\{^*a_1,\ldots,^*a_n\}.$$
\item[ii)] If $b\in V_{<\infty}$ has infinitely many elements, then its $\ast$-image ${^*}b\in W_{<\infty}$ contains $b$ as a proper subset, $${^*}[b]=\{{^*}a: a\in b\}\subsetneq {^*}b.$$ 
\end{itemize}
In particular, it follows from ii) that $\ast: V_{<\infty}\to W_{<\infty}$ is a proper embedding.
\end{proposition}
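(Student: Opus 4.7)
My plan is to handle the two parts separately, using the transfer principle for part (i) and the saturation principle for part (ii).

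For part (i), I would appeal directly to the transfer principle. A finite set $b = \{a_1,\ldots,a_n\}$ can be characterized by the sentence
$$\forall x \in b_0 \bigl(x \in b \;\leftrightarrow\; (x = a_1 \vee \ldots \vee x = a_n)\bigr),$$
where $b_0$ is any set containing $b$ (one can take $b_0$ itself as the bounding parameter, or the appropriate $V_k$, phrased so as to fit the bounded quantifier convention of $\LL_V$). Transferring this sentence, using $^*a_i$ in place of $a_i$ and $^*b$ in place of $b$, yields exactly $^*b = \{{^*}a_1,\ldots,{^*}a_n\}$. This step is essentially routine and the only care needed is to write the sentence so that it lies in $\LL_V$ with bounded quantifiers.

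For part (ii) the inclusion ${^*}[b] \subset {^*}b$ is already in Corollary \ref{extension}, so the issue is to produce an element of ${^*}b$ that is not of the form ${^*}a$ for any $a \in b$. I expect the main (and only) real obstacle to be the rigorous construction of such an element, and this is precisely where saturation is needed. For each $a \in b$ define
$$X_a := {^*}b \setminus \{{^*}a\}.$$
Each $X_a$ is internal, because $^*b$ is internal, $\{{^*}a\} = {^*}\{a\}$ is internal (by part (i)), and internality is preserved under set-theoretic difference (a statement one verifies once via transfer). The family $(X_a)_{a \in b}$ is indexed by the set $b \in V$. To check the finite intersection property, fix $a_1,\ldots,a_n \in b$; since $b$ is infinite, there exists $a \in b$ distinct from $a_1,\ldots,a_n$, and then ${^*}a$ witnesses $X_{a_1} \cap \cdots \cap X_{a_n} \neq \emptyset$ by part (i).

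The saturation principle then produces some $x \in \bigcap_{a \in b} X_a$, that is, an element $x \in {^*}b$ with $x \neq {^*}a$ for every $a \in b$. This gives $x \in {^*}b \setminus {^*}[b]$ and hence the strict inclusion $^*[b] \subsetneq {^*}b$. The final sentence of the proposition, that $\ast\colon V_{<\infty} \to W_{<\infty}$ is a proper embedding, follows because $V_{<\infty}$ contains infinite sets (e.g.\ $\IN$), so already $^*\IN \supsetneq \IN$. The main thing I would be careful about in writing this out is distinguishing $\{{^*}a\}$ from ${^*}\{a\}$ and verifying that set difference of internal sets is internal — standard but worth stating explicitly so that the appeal to saturation is unambiguous.
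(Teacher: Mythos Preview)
Your proposal is correct and follows essentially the same approach as the paper: transfer the first-order sentence $a\in b \Leftrightarrow a=a_1\vee\ldots\vee a=a_n$ for part (i), and for part (ii) apply saturation to the family $X_a={^*}b\setminus\{{^*}a\}$ indexed by $a\in b$, exactly as the paper does. Your version is slightly more careful in spelling out why each $X_a$ is internal and in invoking part (i) to verify the finite intersection property, but the argument is the same.
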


\begin{proof}
While the part i) follows from the transfer principle after observing that the equality $b=\{a_1,\ldots,a_n\}$ can be incoded into the sentence $a\in b \Leftrightarrow a=a_1\vee\ldots\vee a=a_n$ in $\LL_V$, for part ii) consider the collection of sets $(a_i)_{i\in b}$ given by $a_i:={^*}b\backslash\{i\}$ for $i\in b$. While it easy to see that they have the finite intersection property, $a_{i_1}\cap\ldots\cap a_{i_n}\neq\emptyset$ for all $i_1,\cdots,i_n\in b$, $n\in\IN$, every element in $\bigcap_{i\in b} a_i\neq\emptyset$ is an element of ${^*}b\backslash b$. Note that, while in part i) the finite intersection property fails, in part ii) the transfer principle cannot be applied as the corresponding sentence would have infinite length, which is forbidden.   
\end{proof}

In particular, one can show that ${^*}\IR$, the set of *-real (or hyperreal or non-standard real) numbers, contains infinitesimals as well as numbers which are greater than any real number. 

\begin{proposition}\label{infinitesimal} The saturation principle implies the existence of the following ideal objects. 
\begin{itemize}
\item[i)] There exist $r\in{^*}\IR\backslash\{0\}$ such that $|r|<1/n$ for every standard natural number $n\in\IN$. Any such $r\in{^*}\IR$ (including $r=0$) is called \emph{infinitesimal} and we write $r\approx 0$.
\item[ii)] There exist $r\in{^*}\IR$ such that $|r|>n$ for every standard natural number $n\in\IN$. Any such $r\in{^*}\IR$ is called \emph{unlimited}. Any $r\in{^*}\IR$ which is not unlimited is called \emph{limited}. 
\item[iii)] A number $r\in{^*}\IR$ is limited if and only if it is \emph{near-standard} in the sense that there exists a standard real number $s\in\IR$ with $r-s\approx 0$. For every near-standard $r\in{^*}\IR$ we call ${^{\circ}}r:=s\in\IR$ the \emph{standard part} of $r$. 
\item[iv)] Any limited $n\in{^*}\IN$ is standard. 
\end{itemize}
\end{proposition}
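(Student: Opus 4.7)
The plan is to use the saturation principle directly for parts (i) and (ii), the order-completeness of $\IR$ for part (iii), and the previous proposition on $\ast$-images of finite sets together with transfer for part (iv).

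For part (i), I would consider the family of internal sets $a_n := {^*}\{r\in\IR : 0<|r|<1/n\}$ indexed by $n\in\IN$. Each $a_n$ is internal (in fact standard), and the family has the finite intersection property, since for any $n_1,\ldots,n_k\in\IN$ we have $a_{n_1}\cap\cdots\cap a_{n_k}\supset a_{\max\{n_i\}}$, which is the $\ast$-image of a nonempty set of real numbers and hence nonempty. The saturation principle then yields $\bigcap_{n\in\IN} a_n\neq\emptyset$, and any element of the intersection is a nonzero infinitesimal. Part (ii) goes through identically with $a_n:={^*}\{r\in\IR:|r|>n\}$; alternatively, one can simply observe that if $\eps\neq 0$ is infinitesimal as in (i), then $1/\eps$ is unlimited, using that the reciprocal function transfers.

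For part (iii), the direction $\Leftarrow$ is immediate: if $r-s\approx 0$ for some $s\in\IR$, then $|r|\leq |s|+|r-s|<|s|+1$, bounding $r$ by a standard real. For $\Rightarrow$, I would introduce the Dedekind-style candidate $s:={\sup\{t\in\IR: t\leq r\}}$, where the inequality is interpreted in ${^*}\IR$ via the embedding $\IR\subset{^*}\IR$. Limitedness of $r$ ensures that this set is nonempty and bounded above by standard reals, so the supremum exists in $\IR$. For any standard $n\in\IN$, the supremum property produces a real $t\in\IR$ with $s-1/n<t\leq r$, and conversely the upper-bound property forces $r<s+1/n$ (otherwise $s+1/(2n)$ would lie in the set and exceed $s$). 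Combining these yields $|r-s|<1/n$ for every standard $n$, i.e.\ $r-s\approx 0$.

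For part (iv), suppose $n\in{^*}\IN$ is limited, so that there exists a standard $k\in\IN$ with $n<k$. I would consider the finite set $b:=\{0,1,\ldots,k-1\}\in V_{<\infty}$; by part (i) of the preceding proposition, ${^*}b=\{0,1,\ldots,k-1\}\subset\IN$. On the other hand, transfer applied to the $\LL_V$-sentence encoding $b=\{m\in\IN: m<k\}$ gives ${^*}b=\{m\in{^*}\IN: m<k\}$, and hence $n\in\{0,1,\ldots,k-1\}\subset\IN$. The main subtleties I anticipate are purely bookkeeping: in (iii), verifying that inequalities between standard reals and ${^*}$-real numbers behave correctly under the embedding (a direct transfer statement), and in (iv), making sure the transfer step is applied to a bona fide first-order $\LL_V$-sentence in which the finite parameter $k$ appears as a constant rather than as a quantified variable.
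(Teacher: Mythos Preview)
Your proof is correct and follows essentially the same approach as the paper: parts (i), (ii), and (iv) are argued in exactly the same way (saturation on the families $\{r\in{^*}\IR:0<|r|<1/n\}$ and $\{r\in{^*}\IR:|r|>n\}$, and the observation that $\{m\in\IN:m<k\}$ is finite so its $\ast$-image gains no new elements). For part (iii) the paper simply cites a reference, whereas you supply the standard supremum argument; this is additional detail rather than a different route.
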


\begin{proof} For the definitions we refer to (\cite{L2}, definitions 1.2.7 and 1.6.9). Since the existence of infinitesimal and unlimited numbers is the key reason why to care about non-standard analysis, let us give the short proof: Define for every $n\in\IN$ the sets $a_n:=\{r\in{^*}\IR: 0<|r|<1/n\}$ and $b_n:=\{r\in{^*}\IR: |r|>n\}$. Since the corresponding collections of sets obviously have the finite intersection property, we find that $\bigcap_{n\in\IN} a_n$ and $\bigcap_{n\in\IN} b_n$ are non-empty and any element in these sets has the desired properties. For the third part we refer to (\cite{L2}, proposition 1.6.11). Part iv) follows from the observation that there are only finitely many natural numbers smaller than a given one, so the $\ast$-image of the corresponding set does not contain any new elements. \end{proof}

\begin{remark} Along the same lines we have:
\begin{itemize}
\item[i)] Similar statements clearly hold when $^*\IR$ is replaced by $^*\IR^n$ for some standard $n\in\IN$. In particular, for every limited $r>0$ every point on ${^*S}^{n-1}(r)\subset{^*\IR}^n$ is near-standard, i.e., ${^*S}^{n-1}(r)$ is obtained from $S^{n-1}(r)$ by adding points which are infinitesimally close.
\item[ii)] In the same way as $^*\IR$ contains much more elements than $\IR$ itself, the non-standard extension $^*\IH$ of $\IH$ is a much larger space than $\IH$ itself.
\end{itemize}
\end{remark}   

In (\cite{L2}, theorems 1.6.8 and 1.6.15) it is shown that limited and infinitesimal numbers furthermore have the following nice closure properties. 

\begin{proposition}\label{limited-infinitesimal} We have
\begin{itemize}
\item[i)] Finite sums, differences and products of limited numbers are limited.
\item[ii)] Finite sums, differences and products of infinitesimal numbers are infinitesimal.
\item[iii)] The product of an infinitesimal number with a limited number is still infinitesimal.
\item[iv)] The standard part of a sum, difference or product of two limited numbers is the sum, difference or product of their standard parts.
\end{itemize}
\end{proposition}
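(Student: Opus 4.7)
The plan is to proceed directly from the $\epsilon$–$N$ style definitions of limited and infinitesimal numbers given in Proposition \ref{infinitesimal}, combined with the transferred arithmetic of $^*\IR$ (which by the transfer principle satisfies the same ordered field axioms as $\IR$, including the usual triangle inequality and monotonicity of multiplication for absolute values). Since all four assertions reduce to estimates on $|r|$ and $|s|$ in $^*\IR$, the proof is entirely elementary once these axioms are in place; the issue is purely bookkeeping.

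For part (i), if $|r|,|s|<n$ for some standard $n\in\IN$, then by the transferred triangle inequality $|r\pm s|\leq |r|+|s|<2n$ and $|rs|\leq|r|\cdot|s|<n^2$, both limited. For part (ii), given a standard $m\in\IN$, choose a standard $n$ with $n\geq 2m$; then infinitesimality of $r$ and $s$ gives $|r|,|s|<1/n\leq 1/(2m)$, hence $|r\pm s|<1/m$ and $|rs|<1/n^{2}\leq 1/m$, so $r\pm s$ and $rs$ are infinitesimal. Part (iii) is the same estimate: if $|r|<1/n$ for all standard $n$ and $|s|<k$ for some standard $k$, then given a standard $m$ take $n=km$ to obtain $|rs|<k/(km)=1/m$.

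For part (iv), write $r=r_0+\rho$ and $s=s_0+\sigma$ with $r_0,s_0\in\IR$ standard and $\rho,\sigma\approx 0$ (using part iii of Proposition \ref{infinitesimal}). Then $(r\pm s)-(r_0\pm s_0)=\rho\pm\sigma\approx 0$ by (ii), and
\[
rs-r_0 s_0 \;=\; r_0\sigma + \rho s_0 + \rho\sigma,
\]
which is infinitesimal by combining (ii) and (iii) (note $r_0,s_0$ are standard hence limited). Uniqueness of the standard part (also recorded in Proposition \ref{infinitesimal}) then identifies ${^{\circ}}(r\pm s)={^{\circ}}r\pm{^{\circ}}s$ and ${^{\circ}}(rs)={^{\circ}}r\cdot{^{\circ}}s$.

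The only substantive point — and the nearest thing to an obstacle — is ensuring that all the elementary inequalities on $|r|, |s|$ used above are genuinely available in $^*\IR$; this is where the transfer principle is invoked, applied to the first-order statements expressing the ordered field axioms of $\IR$ and the standard properties of $|\cdot|$. Once that is noted, each of (i)–(iv) is a one-line estimate, and no induction beyond the transferred arithmetic is needed since "finite" here means a standard natural number of terms and the binary cases suffice.
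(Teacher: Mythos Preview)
Your proof is correct. The paper itself does not give a proof of this proposition at all: it simply cites \cite{L2}, theorems 1.6.8 and 1.6.15, and states the result. Your direct argument from the definitions via the transferred ordered-field axioms is exactly the standard one found in those references, so there is nothing to compare beyond noting that you have supplied what the paper leaves to the literature.
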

 
\begin{remark} In an analogous way we will prove below that every infinite-dimensional (separable) Hilbert space $\IH$ is contained in a *-finite-dimensional Euclidean vector space $\IF$ of some unlimited but *-finite dimension $N\in{^*}\IN\backslash\IN$. The infinite-dimensional Hilbert space $\IH$ is \emph{not} a *-finite-dimensional Euclidean vector space itself, but is only \emph{contained} in some space which behaves as if it were finite-dimensional. \end{remark}

Apart from showing that the non-standard model contains infinitely-large numbers, the saturation principle immediately leads to the following, even more surprising fact, see (\cite{L2}, theorem 2.9.2).

\begin{proposition} For every standard set $b\in V_{<\infty}$ there exists a non-standard set $c\in W_{<\infty}$, which contains all elements of $a$, i.e., $a\in b$ implies ${^*}a\in c$, and which is \emph{*-finite} in the sense that there is a bijection from $c$ to an internal set $\{n\in{^*}\IN: n\leq N\}$ for some $N\in{^*}\IN$. \end{proposition}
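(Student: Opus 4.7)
The plan is to apply the saturation principle to the family of internal subsets of the $\ast$-image of the set of finite subsets of $b$, indexed by the standard set $b$ itself. The existence of the desired $\ast$-finite set $c$ will then fall out of a single saturation argument, once the correct family of internal sets is set up.

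First I would consider $\PP_{\mathrm{fin}}(b)$, the collection of all finite subsets of $b$, which lies in $V_{<\infty}$ (e.g., as an element of $V_{n+1}$ when $b\in V_n$). Its $\ast$-image ${^*}\PP_{\mathrm{fin}}(b)\in W_{<\infty}$ consists, by the transfer principle, exactly of those internal subsets $c\subset {^*}b$ which are $\ast$-finite in the required sense, namely, for which there is an internal bijection from $c$ onto some initial segment $\{n\in{^*}\IN:n\leq N\}$ of ${^*}\IN$. (The first-order sentence expressing that every element of $\PP_{\mathrm{fin}}(b)$ admits such a bijection transfers verbatim.) Thus we only need to find a member of ${^*}\PP_{\mathrm{fin}}(b)$ which contains every ${^*}a$, $a\in b$.

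Next, for each $a\in b$ define
\[
A_a \;:=\; \bigl\{\,c\in {^*}\PP_{\mathrm{fin}}(b)\,:\, {^*}a\in c\,\bigr\}.
\]
Each $A_a$ is internal, since it is defined by an internal predicate applied to the internal set ${^*}\PP_{\mathrm{fin}}(b)$. I would then verify the finite intersection property: given any finitely many $a_1,\ldots,a_n\in b$, the standard finite set $F=\{a_1,\ldots,a_n\}$ belongs to $\PP_{\mathrm{fin}}(b)$, so by the first part of the previous proposition ${^*}F=\{{^*}a_1,\ldots,{^*}a_n\}\in {^*}\PP_{\mathrm{fin}}(b)$, and this single element witnesses ${^*}F\in A_{a_1}\cap\cdots\cap A_{a_n}$.

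Finally, since the index set $b$ lies in $V$, the saturation principle of Theorem~\ref{non-standard model} applies to the family $(A_a)_{a\in b}$ and yields a common element
\[
c\;\in\;\bigcap_{a\in b} A_a \;\subset\; {^*}\PP_{\mathrm{fin}}(b).
\]
By construction $c$ is $\ast$-finite, and ${^*}a\in c$ for every $a\in b$, which is exactly what is claimed. The only subtle point is the requirement in the saturation principle that the sets $A_a$ be internal and the index set $b$ be standard; both are immediate here, so no genuine obstacle arises.
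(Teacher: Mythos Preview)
Your argument is correct and is the standard saturation proof of this fact. The paper itself does not supply a proof here, merely citing (\cite{L2}, theorem 2.9.2); however, the paper carries out exactly the same style of argument in Proposition~\ref{existence} for the analogous statement that $\IH$ embeds in a $\ast$-finite-dimensional space $\IF$, so your approach matches the paper's methodology. One minor remark: you justify that each $A_a$ is internal via the Internal Definition Principle, which in the paper's ordering appears only later (Proposition~\ref{internal-definition}); you can avoid this forward reference by noting that $A_a$ is in fact standard, namely $A_a={^*}\{F\in\PP_{\mathrm{fin}}(b):a\in F\}$, which follows directly from transfer.
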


Since a subset of a finite set in the standard model $V=(V_n)_{n\in\IN}$ is again a finite set, this seems to lead to an obvious logical contradiction. However, since the transfer principle only applies to subsets of *-finite sets which belong to the non-standard model, i.e., are internal themselves, the logical paradoxon is resolved in the following

\begin{proposition} For every infinite set $b\in V_{<\infty}$, the corresponding proper subset $b={^*}[b]=\{{^*}a: a\in b\}$ of ${^*}b$ is external. For example, $\IR$ and $\IN$ are external. In particular, the non-standard model is \emph{not full}, $W_n\subsetneq\PP(W_0\cup\ldots\cup W_{n-1})$. \end{proposition}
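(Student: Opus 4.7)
The plan is to first prove that $\IN$, viewed as a subset of ${^*}\IN$, is external, and then to reduce the general case to this one via a surjection $g: b \to \IN$. The failure of fullness will then follow immediately from the externality of $\IR$.

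For the first step, I would invoke the transfer principle applied to the well-ordering of $\IN$: the sentence ``every nonempty subset of $\IN$ has a minimum'' transfers to the statement that every nonempty internal subset of ${^*}\IN$ has a minimum. Suppose for contradiction that $\IN \subset {^*}\IN$ is internal. Then the set difference ${^*}\IN \setminus \IN$ is also internal (differences of internal sets are internal by transfer), and it is nonempty, as it contains any unlimited ${^*}$-natural number whose existence is established just as in Proposition \ref{infinitesimal}. Let $N$ be its minimum. Then $N - 1 < N$ forces $N - 1 \in \IN$ by minimality, and hence $N = (N - 1) + 1 \in \IN$, contradicting $N \in {^*}\IN \setminus \IN$.

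For the general case, let $b \in V_{<\infty}$ be infinite. By the axiom of choice, pick a countably infinite subset $c \subset b$ together with a bijection $h : \IN \to c$, and extend to a surjection $g : b \to \IN$ by setting $g|_c = h^{-1}$ and $g(a) = 0$ for $a \in b \setminus c$. By Corollary \ref{extension}, ${^*}g$ extends $g$, so ${^*}g({^*}a) = g(a)$ for every $a \in b$, giving ${^*}g({^*}[b]) = g(b) = \IN$. If ${^*}[b]$ were internal, then ${^*}g({^*}[b]) = \IN$ would be internal as the image of an internal set under an internal function (by transfer of ``the image of a set under a function is a set''), contradicting the first step. Applied to $b = \IN$ and $b = \IR$, this shows in particular that $\IN$ and $\IR$ are external.

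Finally, since $\IR \subset {^*}\IR = W_0$, we have $\IR \in \PP(W_0 \cup \ldots \cup W_{n-1})$ for every $n \geq 1$; but $\IR$ being external means $\IR \notin W_{<\infty}$, so $\IR \notin W_n$, giving the strict inclusion $W_n \subsetneq \PP(W_0 \cup \ldots \cup W_{n-1})$. The main obstacle is the subtle distinction between internal and external sets in the first step: transfer can only yield the minimum of ${^*}\IN \setminus \IN$ under the hypothesis that this set is internal, which is why the argument must proceed by contradiction, and one must carefully avoid attempting to transfer any statement that quantifies over the external set $\IN$ itself.
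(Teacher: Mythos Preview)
Your argument is correct. The paper itself does not supply a proof but simply cites (\cite{L2}, proposition 2.9.6); the route you take --- first showing $\IN$ is external via the transferred well-ordering of ${^*}\IN$ and the resulting contradiction at the least element of ${^*}\IN\setminus\IN$, then reducing the general infinite $b$ to this case by pushing ${^*}[b]$ forward along the internal surjection ${^*}g:{^*}b\to{^*}\IN$ --- is the standard proof and is essentially what one finds in that reference. Your deduction of the failure of fullness from the externality of $\IR$ is likewise the intended one.
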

 
For the short proof we refer to (\cite{L2}, proposition 2.9.6). While these results are satisfactory from the theoretical point of view, for practical purposes it is rather important to know which subsets of an internal set in the non-standard model are still internal themselves, so that statements can be proven for them by applying the transfer principle. Since in applications one is almost exclusively interested in subsets which can be defined by requiring that their elements have a specific property, the following positive result originally due to Keisler, (\cite{Ke}, theorem 15.14), see also (\cite{L2}, theorem 2.8.4), is sufficient for all our purposes. 

\begin{proposition}\label{internal-definition} (Internal Definition Principle) Every \emph{definable set} belongs to the non-standard model $W=(W_n)_{n\in\IN}$ , that is, for every formula $A(x)$ in $\LL_W$ the set $\{a\in W_{<\infty}: W\models A(x)(a)\}$ is internal. \end{proposition}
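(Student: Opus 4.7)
The plan is to reduce the assertion to a single application of the transfer principle, by interpreting "the set of satisfiers of a formula" as a set-theoretic operation already present in the standard model $V$. Any formula $A(x)$ in $\LL_W$ mentions only finitely many parameters $b_1,\ldots,b_k$, each of which is internal, so $b_i\in W_{n_i}={^*}V_{n_i}$ for appropriate indices $n_i$. Moreover, since quantifiers in $\LL_W$-sentences are bounded by definition, the free variable $x$ is constrained to range over some internal ambient set $B\in W_m$. Write $\hat{A}(x,y_1,\ldots,y_k)$ for the syntactic formula obtained by abstracting the parameters into fresh free variables; this is a fixed formula schema which is meaningful in both $\LL_V$ and $\LL_W$.

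Inside $V$, fullness of the model guarantees that for every tuple $(c_1,\ldots,c_k,b)\in V_{n_1}\times\cdots\times V_{n_k}\times V_m$ the subset $\{a\in b:V\models \hat{A}(a,c_1,\ldots,c_k)\}$ of $b$ belongs to $V_{m+1}$. One therefore obtains a bona fide function
\[
F:V_{n_1}\times\cdots\times V_{n_k}\times V_m\to V_{m+1},\qquad F(c_1,\ldots,c_k,b):=\{a\in b:V\models \hat{A}(a,c_1,\ldots,c_k)\},
\]
and the sentence
\[
\forall y_1\cdots\forall y_k\,\forall b\,\forall a\bigl(a\in F(y_1,\ldots,y_k,b)\Leftrightarrow a\in b\wedge \hat{A}(a,y_1,\ldots,y_k)\bigr)
\]
holds in $V$. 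Applying the transfer principle, the starred counterpart ${^*}F:W_{n_1}\times\cdots\times W_{n_k}\times W_m\to W_{m+1}$ satisfies the analogous sentence in $W$. Evaluating at the given internal tuple $(b_1,\ldots,b_k,B)$ yields the internal set ${^*}F(b_1,\ldots,b_k,B)\in W_{m+1}$, whose elements are exactly the $a\in B$ with $W\models A(x)(a)$, which is precisely the set claimed to be internal.

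The step I would verify most carefully — and the main conceptual obstacle — is that, for the fixed syntactic object $\hat{A}$, the satisfaction relation $(a,c_1,\ldots,c_k)\mapsto [V\models \hat{A}(a,c_1,\ldots,c_k)]$ is itself set-theoretically definable inside $V$, so that $F$ really exists as an element of some $V_p$ rather than being only a metamathematical class. This is nothing but Tarski's inductive definition of truth carried out in $V$, and the induction on formula complexity is routine here because atomic formulas involve only $=$ and $\in$, which are absolute under the monomorphism $\ast$, and because every quantifier in $\LL_V$ is explicitly bounded, so the construction of $F$ amounts to finitely many Boolean combinations together with bounded projections — all operations which by fullness preserve membership in $V_{<\infty}$. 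Granted this encoding, the proposition is simply the transfer of the tautology that "sets defined by bounded formulas with set parameters are sets."
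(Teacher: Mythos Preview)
The paper does not actually prove this proposition; it records it as a known result due to Keisler and simply refers the reader to (\cite{Ke}, theorem 15.14) and (\cite{L2}, theorem 2.8.4). Your argument --- abstract the finitely many internal parameters to fresh variables, use fullness of $V$ to package the set-of-satisfiers operation as a genuine function $F\in V_{<\infty}$, then transfer and evaluate at the given internal parameter tuple --- is precisely the standard proof one finds in those references, including the point you correctly flag about bounded quantification keeping the inductive definition of satisfaction inside the superstructure.
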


In particular, every finite subset of an internal set is internal. \\

The fact that every internal set containing an infinite standard set as a subset must be strictly larger leads to the so-called \emph{spillover principles}, see (\cite{L2}, theorem 2.8.12). 

\begin{proposition}\label{spillover} Let $b$ denote an internal subset of ${^*}\IN$. Then it holds: 
\begin{itemize}
\item[i)] If for every $m\in\IN$ there exists some $n\geq m$ with $n\in b$, then $b$ must contain an unlimited *-natural number. 
\item[ii)] If for every unlimited *-natural number $N$ there exists a *-natural number $n\leq N$ with $n\in b$, then $b$ must also contain a standard natural number. 
\end{itemize}
\end{proposition}

\begin{proof} To prove i) define for every $m\in\IN$ the internal subset $b_m=\{n\in b: n\geq m\}$ of $b$. Since $b_{m_1}\cap\ldots\cap b_{m_k}\neq\emptyset$ for every finite collection $m_1,\ldots,m_k\in\IN$, it follows by the saturation principle that $\bigcap_{m\in\IN} b_m$ must contain an element which is an element of $b$ and greater than every standard natural number. In order to prove ii) observe that, by transfer, $b$ must have a minimal element $n$. If $n$ was unlimited, then there must exist $m\in b$ with $m\leq n-1$, contradicting the minimality of $n$. \end{proof}   

Finally, one of the main benefits of non-standard analysis is that the clumpsy $\epsilon$-formalism can be avoided by introducing infinitesimals and unlimited *-natural numbers. For the following proposition we refer to (\cite{L2}, theorem 1.7.1).

\begin{proposition}\label{convergence} 
A sequence $(s_n)_{n\in\IN}$ of real numbers converges to zero, $s_n\to 0$, as $n\to\infty$ if and only if $s_N:={^*s_N}\approx 0$ for all unlimited $N\in{^*}\IN\backslash\IN$. 
\end{proposition}

\begin{proof} First assume that $s_n\to 0$ as $n\to\infty$. By definition we know that for all $\eps>0$ there exists $n_0\in\IN$ such that $\forall n\in\IN: n\geq n_0\Rightarrow |s_n|<\eps$. By transfer, it follows that $\forall n\in{^*\IN}: n\geq n_0\Rightarrow |{^*s_n}|<\eps$. Since every unlimited $N\in{^*}\IN\backslash\IN$ is greater than every standard $n_0\in\IN$, it follows that $|{^*s_N}|<\eps$ for all standard $\eps>0$, that is, $|{^*s_N}|\approx 0$. In the opposite direction, assume that $|{^*s_N}|\approx 0$ for all unlimited $N\in{^*}\IN\backslash\IN$, in particular, for every standard $\eps>0$ and all unlimited $N$ there exists some $m\leq N$ such that $\forall n\in{^*\IN}: n\geq m\Rightarrow|{^*s_n}|<\eps$. By the spillover principle it follows that there must exist some standard $m\in\IN$ such that $\forall n\in\IN: n\geq m\Rightarrow|s_n|<\eps$, that is, $s_n\to 0$ as $n\to\infty$. \end{proof}

Since convergence in metric spaces is defined by requiring that the distance between points converges to zero, the above result immediately generalizes to all metric spaces.

\section{*-Finite-dimensional representations of infinite-dimensional flows}

As in the introduction, let $\IH=(\IH,\omega)$ be an infinite-dimensional separable symplectic Hilbert space and let $J_0$ denote the complex structure on $\IH$ which relates the symplectic form $\omega$ with the Hilbert space inner product via $\<\cdot,\cdot\>=\omega(\cdot,J_0\cdot)$. Furthermore let $H:\IR\times\IH\to\IR$, $H_t:=H(t,\cdot)$ be a smooth time-dependent Hamiltonian on the full Hilbert space $\IH$; in order to keep notation simple, we will ignore the $t$-dependence below. Just like the last two sections, also this section is of foundational nature and is intended to serve a reference for future work about the generalization of results in finite-dimensional Hamiltonian dynamics to the infinite-dimensional case using our non-standard approach. \\

The starting point of our proof is the observation that, using the saturation principle, every such Hamiltonian flow $\phi^{\IH}_t=\phi_t$ can be represented by a *-finite-dimensional Hamiltonian flow $\phi^{\IF}_t$. The underlying *-finite-dimensional space $\IF$ is characterized by the property that it is a complex subspace of ${^*\IH}$ which is large enough to contain the infinite-dimensional separable symplectic Hilbert space $\IH$ as a subspace. Recall that a complete orthonormal basis $(e_i)_{i\in\IN}$ for the symplectic Hilbert space $\IH$ is called unitary if, in addition, $J_0 e_{2i+1}=e_{2i+2}$ for all $i\in\IN\cup\{0\}$. \\

Let $\EE(\IH)$ denote the set of finite-dimensional ($J_0$-)complex subspaces of $\IH$. Then every element in the *-image ${^*\EE(\IH)}$ of $\EE(\IH)$ is called a *-finite-dimensional (${^*J_0}$-)complex subspace of ${^*\IH}$. Note that, since $\EE(\IH)$ is a set in the standard model $V$, it has a *-image in the non-standard model. Since every $F\in\EE(\IH)$ is a a subset of $\IH$ and a symplectic vector space over $\IR$ with respect to the restriction of $\omega$, every $F\in{^*\EE(\IH)}$ is a subset of  ${^*\IH}$ and a symplectic vector space over ${^*\IR}$ with respect to the restriction of the *-extension ${^*\omega}:{^*\IH}\times{^*\IH}\to{^*\IR}$ of $\omega$ by transfer. Note that ${^*\IR}$ is indeed a field by transfer, where addition and multiplication extend the corresponding operations on $\IR$ in the sense of corollary \ref{extension}. The real dimension defines a function $\dim:\EE(\IH)\to\IN$ in the standard model. By corollary \ref{extension} it follows that its *-image ${^*\dim}$ assigns to every *-finite-dimensional complex subspace $F$ a *-natural number $\dim(F):={^*\dim}(F)\in{^*\IN}$.

\begin{proposition}\label{existence}
There exists a *-finite-dimensional complex subspace $\IF$ of ${^*\IH}$ which contains the infinite-dimensional space $\IH$ as a complex subspace, $$\IH\subset\IF\subset{^*\IH}.$$ Furthermore, every complete unitary basis $(e_i)_{i\in\IN}$ can be extended to a unitary basis $(\tilde{e}_i)_{i=1}^{\dim\IF}$ of $\IF$ in the sense that $\tilde{e}_i=e_i$ for all $i\in\IN$, where the unlimited natural number $\dim\IF\in{^*\IN}\backslash\IN$ denotes the dimension of $\IF$ in the non-standard model, $\IF={^*\IR}^{\dim\IF}$.
\end{proposition}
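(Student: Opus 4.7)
The plan is to obtain $\IF$ via a single application of the saturation principle (Theorem \ref{non-standard model}) to a family of internal conditions indexed by both the integers $n \in \IN$ (controlling agreement of the basis up to index $2n$) and the points $x \in \IH$ (forcing $x$ to lie in $\IF$). Each individual condition is realized by an honest finite-dimensional subspace of $\IH$ via Gram--Schmidt, so the finite intersection property follows by transfer, and saturation then produces a single $\IF$ simultaneously satisfying all of them.

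Concretely, let $\mathcal{U} \in V_{<\infty}$ be the set of pairs $(V,b)$ in which $V \subset \IH$ is a finite-dimensional unitary symplectic subspace and $b = (b(1),\ldots,b(\dim V))$ is an ordered unitary basis of $V$; its $\ast$-image ${^*\mathcal{U}}$ is the set of analogous pairs in ${^*\IH}$ with $V$ now $\ast$-finite-dimensional. For each $n \in \IN$ and each $x \in \IH$ set
\[
A_n := \{(V,b) \in {^*\mathcal{U}} : b(i) = e_i \text{ for } 1 \le i \le 2n\}, \quad B_x := \{(V,b) \in {^*\mathcal{U}} : x \in V\},
\]
both of which are internal by the Internal Definition Principle (Proposition \ref{internal-definition}). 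For the finite intersection property, fix $n_1,\ldots,n_k \in \IN$ and $x_1,\ldots,x_m \in \IH$ and put $n := \max_j n_j$; the standard sentence ``given any $2n$ unitary vectors and any further finite list of vectors in $\IH$, there exists a finite-dimensional unitary symplectic subspace of $\IH$ containing all of them together with a unitary basis whose first $2n$ entries are the prescribed unitary vectors'' is true in $V$ (take the linear span and apply Gram--Schmidt keeping $e_1,\ldots,e_{2n}$ fixed), and its transfer to $W$ supplies a common element of $A_{n_1} \cap \cdots \cap A_{n_k} \cap B_{x_1} \cap \cdots \cap B_{x_m}$.

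Since $\IH$ is a set in $V$, the saturation principle applies to the collection $\{A_n\}_{n \in \IN} \cup \{B_x\}_{x \in \IH}$ and delivers $(\IF,(\tilde{e}_i)_{i=1}^{\dim\IF}) \in \bigcap_{n \in \IN} A_n \cap \bigcap_{x \in \IH} B_x$. By construction $\tilde{e}_i = e_i$ for every $i \in \IN$ and $\IH \subset \IF \subset {^*\IH}$; moreover $\IF$ contains $e_1,\ldots,e_{2n}$ for every standard $n$, so $\dim\IF \ge 2n$ for all $n \in \IN$ and hence $\dim\IF \in {^*\IN}\setminus\IN$. The main obstacle---and the reason that a naive saturation using only the countable set $\{e_i : i \in \IN\}$ is insufficient---is that the resulting $\IF$ must contain every element of $\IH$, not merely every basis vector, since a generic element of $\IH$ is an external $\ell^2$-limit of partial sums of the $e_i$ and so a priori need not belong to any $\ast$-finite-dimensional subspace generated by the $e_i$'s. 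This is overcome precisely because the entire set $\IH$, although of cardinality continuum, is itself in $V$ and may therefore legitimately serve as an index set in Theorem \ref{non-standard model}.
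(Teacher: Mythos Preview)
Your proof is correct and follows essentially the same approach as the paper: both arguments obtain $\IF$ (and its basis) from the saturation principle, using that finitely many vectors in $\IH$ always sit inside a finite-dimensional unitary subspace with a unitary basis extending the given $e_1,\ldots,e_{2n}$. The only difference is organizational: the paper applies saturation twice---first over $\{A_u\}_{u\in\IH}$ to produce $\IF$, then over the sets of unitary bases of the now-fixed $\IF$ extending $e_1,\ldots,e_n$---whereas you package both requirements into a single saturation over pairs $(V,b)$, which is a mild streamlining but not a different idea.
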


\begin{proof} For a similar result we refer to (\cite{Os}, proposition 2.1). The proof of this surprising fact is, like the proof of the existence of infinitely large numbers, just an immediate consequence of the saturation principle for non-standard models: Concerning the existence of $\IF$, for every element $u\in\IH$ let $A_u$ denote the set of *-finite-dimensional complex subspaces $F$ of ${^*\IH}$ such that $u\in F$. Since for every finite collection $u_1,\ldots,u_n\in\IH$ there exists a finite-dimensional complex subspace containing $u_i$, $i=1,\ldots,n$, by saturation it follows that there must exist a *-finite-dimensional complex subspace, denoted by $\IF$, in the common intersection of all $A_u$, $u\in\IH$. For the extension of the complete unitary basis, we again employ the saturation principle: Since every finite collection $e_1,\ldots,e_n$ of basis vectors can be extended to a unitary basis $(\tilde{e}_i)_{i=1}^{\dim\IF}$ of $\IF$ by the transfer principle, it follows from saturation that there exists a unitary basis $(\tilde{e}_i)_{i=1}^{\dim\IF}$ of $\IF$ with $\tilde{e}_i=e_i$ for all $i\in\IN$.\end{proof}

For the rest of this exposition let us fix a complete unitary basis for the symplectic Hilbert space $\IH$ which we assume to be extended to a unitary basis of $\IF$, denoted by $(e_i)_{i=1}^{\dim\IF}$.  In particular, note that every $v\in\IF$ can be written as a *-finite sum, $$v=\sum_{i=1}^{\dim\IF} \<v,e_i\>\cdot e_i\;\in\IF.$$ \\

Recall from proposition \ref{infinitesimal} that every limited *-real number $r\in{^*\IR}$ is near-standard in the sense that there exists a standard real number $s\in\IR$ with $r\approx s$, called the standard part ${^{\circ}}r:=s$ of $r$. Replacing the inclusion $\IR\subset{^*\IR}$ of standard elements in a non-standard set by the inclusion $\IH\subset\IF$, we are lead to the following 

\begin{definition}\label{near-standard} An element $u\in\IF$ is called 
\begin{itemize}
\item[i)] \emph{limited} if its norm $|u|\in{^*\IR}$ is limited in the sense of proposition \ref{infinitesimal}, 
\item[ii)] \emph{near-standard} if there exists $v\in\IH$ with $u\approx v$ and we call $v$ the \emph{standard part} of $u$ and write ${^{\circ}u}:=v$. 
\end{itemize}
\end{definition} 

Here we say that $u\approx v$ for two elements $u,v\in\IF$ if their metric distance given by the *-Euclidean norm $|\cdot|=|\cdot|_{\IF}$ is infinitesimal, i.e., $|v-u|\approx 0$. \\

\begin{remark} Note that the *-extensions on $^*\IH$ of the symplectic form, the inner product and the complex structure on $\IH$ restrict to a symplectic form, inner product and complex structure on $\IF$ which extend the corresponding structures on $\IH\subset\IF$ by corollary \ref{extension}. In particular, we have $|v|_{\IF}\approx |{^{\circ}v}|_{\IH}$ for all near-standard $v\in\IF$ and we do not need to distinguish between the structures on $\IH$ and their *-finite-dimensional counterparts on $\IF$ in what follows. Furthermore, by transfer, the square of Euclidean norm $|\cdot|=|\cdot|_{\IF}$ on the *-finite-dimensional space $\IF$ is given by the *-finite sum $\sum_{i=1}^{\dim\IF}\<v,e_i\>^2$. \end{remark}
  
Of course, it is important to have a non-standard characterization of all near-standard points in $\IF$.  

\begin{proposition}\label{characterization}
A limited element $v\in\IF$ is near-standard if and only if for all unlimited $N\in{^*\IN}\backslash\IN$ we have $$\sum_{i=2N+1}^{\dim\IF} \<v,e_i\>^2\;\approx\; 0.$$ 
\end{proposition}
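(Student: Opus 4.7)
\emph{Plan of proof.} The statement is the non-standard analogue of the classical fact that an element of a Hilbert space is characterised by the convergence of its Parseval expansion in a fixed complete orthonormal basis. My plan is to exploit two compatible Parseval identities: the internal ${^*}$-Parseval identity $\|v\|_{\IF}^2=\sum_{i=1}^{\dim\IF}\<v,e_i\>^2$ on $\IF$ obtained by transfer from finite dimensions, and the ordinary Parseval identity $\|w\|_{\IH}^2=\sum_{i\in\IN}\<w,e_i\>_{\IH}^2$ on $\IH$. They are compatible because, by the construction of $\IF$ following Proposition~\ref{existence}, the $\IF$-inner product extends the $\IH$-inner product, so $\|w\|_{\IF}=\|w\|_{\IH}$ and $\<w,e_i\>=\<w,e_i\>_{\IH}$ for every $w\in\IH$ and every standard $i\in\IN$.

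For the forward direction, I would start from $v\approx w$ with $w\in\IH$ and first prove that the pure $w$-tail $\sum_{i=M+1}^{\dim\IF}\<w,e_i\>^2$ is infinitesimal for every unlimited $M$. The two Parseval identities above rewrite this tail as $\|w\|_{\IH}^2-\sum_{i=1}^M\<w,e_i\>^2$, and for every standard $n$ this quantity coincides with the genuine real-series tail $\sum_{i=n+1}^\infty\<w,e_i\>_{\IH}^2$, hence is arbitrarily small for large standard $n$. Since $M\mapsto\sum_{i=M+1}^{\dim\IF}\<w,e_i\>^2$ is an internal decreasing function of $M$, the inequality $<\epsilon$ propagates by transfer from some standard $n_{\epsilon}$ to every $M\geq n_{\epsilon}$, and in particular to every unlimited $M$. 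Combining with $\<v,e_i\>^2\leq 2\<v-w,e_i\>^2+2\<w,e_i\>^2$ and summing from $2N+1$ to $\dim\IF$ then gives $\sum_{i=2N+1}^{\dim\IF}\<v,e_i\>^2\leq 2\|v-w\|_{\IF}^2+2\sum_{i=2N+1}^{\dim\IF}\<w,e_i\>^2\approx 0$ via Proposition~\ref{limited-infinitesimal}.

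For the backward direction, I would use that each coefficient $\<v,e_i\>\in{^*}\IR$ is limited (being bounded by $\|v\|_{\IF}$) and hence admits a standard part $a_i:={^{\circ}}\<v,e_i\>\in\IR$ for every $i\in\IN$. Taking standard parts in $\sum_{i=1}^n\<v,e_i\>^2\leq\|v\|_{\IF}^2$ for each standard $n$ yields $\sum_{i=1}^n a_i^2\leq{^{\circ}}\|v\|_{\IF}^2<\infty$, so the candidate $w:=\sum_{i\in\IN}a_ie_i$ converges in $\IH$. To verify $v\approx w$, I would fix a standard $\epsilon>0$ and first extract a \emph{standard} threshold $n_{\epsilon}\in\IN$ with $\sum_{i=n_{\epsilon}+1}^{\dim\IF}\<v,e_i\>^2<\epsilon$: the hypothesis says that the internal upward-closed set $A_{\epsilon}=\{n\in{^*}\IN:\sum_{i=n+1}^{\dim\IF}\<v,e_i\>^2<\epsilon\}$ contains all unlimited $N$, hence must be of the form $\{n\geq n_{\epsilon}\}$ with $n_{\epsilon}$ limited and therefore standard by Proposition~\ref{infinitesimal}(iv). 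Splitting $\|v-w\|_{\IF}^2$ at $n_{\epsilon}$, the first $n_{\epsilon}$ summands form a finite sum of squares of infinitesimals $\<v,e_i\>-a_i\approx 0$ and are themselves infinitesimal, while the remaining tail is bounded via $(a+b)^2\leq 2a^2+2b^2$ by $2\epsilon+2\sum_{i>n_{\epsilon},\,i\in\IN}a_i^2\leq 4\epsilon$, using the short additional Parseval comparison $\sum_{j\in\IN}a_j^2={^{\circ}}\|v\|_{\IF}^2$ implied by the very choice of $n_{\epsilon}$. Letting $\epsilon\to 0$ through standard positives forces $\|v-w\|_{\IF}\approx 0$, so $v$ is near-standard with ${^{\circ}}v=w$.

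The main obstacle I anticipate is the careful synchronisation of the internal ${^*}$-Parseval identity on $\IF$ with the standard Parseval identity on $\IH$: the two agree at standard indices $i\in\IN$, but any statement about the $\ast$-finite sum has to be transported across the external gap between $\IN$ and the unlimited part of $[1,\dim\IF]\cap{^*}\IN$. This is handled throughout by monotonicity of the tail functions together with transfer (equivalently, the spillover-type argument of Proposition~\ref{spillover}); once this bookkeeping is settled, every remaining estimate is a routine manipulation of limited and infinitesimal numbers via Proposition~\ref{limited-infinitesimal}.
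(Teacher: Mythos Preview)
Your proof is correct and follows essentially the same route as the paper: both hinge on comparing the internal Parseval identity $\|v\|_{\IF}^2=\sum_{i=1}^{\dim\IF}\<v,e_i\>^2$ with the standard Parseval identity on $\IH$, and on passing between the standard and unlimited index ranges via an overspill/underflow argument. The paper compresses both directions into a single chain of equivalences by invoking Proposition~\ref{convergence} (``$v$ near-standard $\Leftrightarrow$ the partial sums $\sum_{i=1}^{2N}\<v,e_i\>^2$ converge $\Leftrightarrow$ the tail is infinitesimal for all unlimited $N$''), whereas you treat the two implications separately with explicit $\epsilon$-bookkeeping and an explicit construction of the candidate $w=\sum_{i\in\IN}{^{\circ}}\<v,e_i\>\,e_i$; your underflow step extracting a standard $n_{\epsilon}$ is precisely the content of Proposition~\ref{convergence} unfolded. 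Your version is somewhat more detailed than the paper's sketch---in particular you make the backward construction of $w$ and the estimate $\sum_{i>n_{\epsilon}}a_i^2\leq\epsilon$ (via ${^{\circ}}\|v\|_{\IF}^2=\sum_{j\in\IN}a_j^2$) explicit, which the paper leaves to the reader---but the underlying mechanism is the same.
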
 

\begin{proof} Assume that $v$ is near-standard, that is, there exists $u\in\IH\subset\IF$ with $v\approx u$. First, observe that by $|v|\approx |u|<\infty$, we know that $v$ is indeed limited. On the other hand, since $$\sum_{i=1}^{2n}\<u,e_i\>^2\to |u|^2(\in\IR^+)\,\,\textrm{as}\,\,n\to\infty$$ in the standard sense, it follows together with $|u|^2=\sum_{i=1}^{\dim\IF} \<u,e_i\>^2$ that for every standard $\eps>0$ there exists some standard $n\in\IN$ such that $$\sum_{i=2n+1}^{\dim\IF} \<u,e_i\>^2\,=\,\sum_{i=1}^{\dim\IF}\<u,e_i\>^2\;-\;\sum_{i=1}^{2n}\<u,e_i\>^2\,<\,\eps.$$ But from this it follows that for every unlimited $N\in{^*\IN}\backslash\IN$ we have that $\sum_{i=2N+1}^{\dim\IF} \<u,e_i\>^2<\eps$ for all standard $\eps>0$, that is, $$\sum_{i=2N+1}^{\dim\IF} \<v,e_i\>^2\,\approx\, \sum_{i=2N+1}^{\dim\IF} \<u,e_i\>^2\,\approx\,0.$$ 

In the opposite direction, we first observe that by the limitedness of $|v|^2=\sum_{i=1}^{\dim\IF}\<v,e_i\>^2$ we know that $\<v,e_i\>$ is limited and hence near-standard for all $i\in\IN$, and we want to show that $$v\,\approx\, u\,:=\,\sum_{i=1}^{\infty} {^{\circ}\<v,e_i\>} e_i\in\IH.$$ To this end fix again some standard $\eps>0$. Since $$\sum_{i=2N+1}^{\dim\IF} \<v,e_i\>^2\approx 0\,\,\textrm{and hence}\,\,\sum_{i=2N+1}^{\dim\IF} \<v,e_i\>^2<\eps$$ for all unlimited $N\in{^*\IN}\backslash\IN$, it follows from proposition \ref{spillover} that there must exist some standard $n\in\IN$ with $$\sum_{i=2n+1}^{\dim\IF} \<v,e_i\>^2<\eps,\,\,\textrm{in particular,}\,\,\sum_{i=2n+1}^{2m} {^{\circ}\<v,e_i\>^2}<\eps$$ for all standard $m\geq n$. Note that from this it follows that the limit $u=\sum_{i=1}^{\infty} {^{\circ}\<v,e_i\>} e_i\in\IH$ exists and it remains to be shown that $v\approx u$. But since for every standard $\eps>0$ there exists $n\in\IN$ with $$\Big|v-\sum_{i=1}^{2n}\<v,e_i\>e_i\Big|<\eps\,\,\textrm{and}\,\,\Big|u-\sum_{i=1}^{2n}\<u,e_i\>e_i\Big|<\eps,$$ it follows from $\sum_{i=1}^{2n}\<v,e_i\>e_i\approx\sum_{i=1}^{2n}\<u,e_i\>e_i$ that $|v-u|<2\eps$ for all standard $\eps>0$, that is, $v\approx u$. \end{proof} 

It still remains to introduce the *-finite-dimensional Hamiltonian flow $\phi^{\IF}_t$ on $\IF$ which represents the infinite-dimensional Hamiltonian flow $\phi^{\IH}_t=\phi_t$ on $\IH\subset\IF$. For this it however suffices to observe that the Hamiltonian function $H:\IR\times\IH\to\IR$ defining $\phi^{\IH}_t$ has a non-standard extension ${^*H}: {^*\IR}\times{^*\IH}\to{^*\IR}$. Apart from the fact that for every $t\in\IR$ we have that ${^*H}_t$ is an extension of $H_t$, as already mentioned above we will from now on suppress the time-dependence of the Hamiltonian for notational simplicity. \\

It follows from the transfer principle that the restriction of ${^*H}$ to the *-finite-dimensional subspace $\IF$ defines a *-finite-dimensional Hamiltonian flow $\phi^{\IF}_t$ on $\IF$. Note that, again by transfer, the corresponding Hamiltonian vector field $X^{\IF}$ is obtained by projecting to $\IF$ (along its symplectic complement) the *-image ${^*X^{\IH}}=X^{^*\IH}: {^*\IH}\to{^*\IH}$ of the Hamiltonian vector field $X^{\IH}$ defining $\phi^{\IH}_t$.  \\

\begin{remark}\label{differentiable} We quickly need to review the notion of differentiability in the non-standard sense: It follows from the transfer principle that the map $\phi^{\IF}:{^*\IR}\times\IF\to\IF$ is differentiable with respect to $t\in{^*\IR}$ in the non-standard sense and $\del_t \phi^{\IF}_t=X^{\IF}_t\circ\phi^{\IF}_t$. This means that for every $(t,u)\in{^*\IR}\times\IF$ and every $\eps\in{^*\IR}^+$ there exists $\delta\in{^*\IR}^+$ such that for all $t'\in{^*\IR}$ with $|t'-t|<\delta$ we have $$\frac{|\phi^{\IF}_{t'}(u)-\phi^{\IF}_t(u)-X^{\IF}_t(\phi^{\IF}_t(u))\cdot (t'-t)|}{|t'-t|}<\eps.$$ Note that the symplectic gradient $X^{\IF}$ is itself defined using the non-standard differential of the restriction of ${^*H}$ to $\IF$. \end{remark}

Using classical arguments of non-standard model theory, see \cite{Os}, it is now not very hard to show that, after employing the standard part map, the restriction of the *-finite-dimensional Hamiltonian flow $\phi^{\IF}_t$ to $\IH\subset\IF$ indeed agrees with the infinite-dimensional Hamiltonian flow $\phi^{\IH}_t$. 

\begin{proposition} For every near-standard $v\in\IF$ and every $t\in [0,1]$ we have that $\phi^{\IF}_t(v)\in\IF$ is near-standard and it holds $${^{\circ}\phi^{\IF}_t(v)}\;=\;\phi^{\IH}_t({^{\circ}v}).$$ \end{proposition}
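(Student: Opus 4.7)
The plan is to establish the single estimate $\phi^{\IF}_t(v) \approx \phi^{\IH}_t({^{\circ}v})$ in $\IF$ for every $t \in [0,1]$; this yields both near-standardness of $\phi^{\IF}_t(v)$ and the identity ${^{\circ}\phi^{\IF}_t(v)} = \phi^{\IH}_t({^{\circ}v})$ in one stroke. Write $w := {^{\circ}v} \in \IH$, $u(t) := \phi^{\IH}_t(w)$, and $x(t) := \phi^{\IF}_t(v)$. Since $u$ is smooth on the compact interval $[0,1]$, it is contained in a standard bounded ball $B \subset \IH$, and smoothness of $H$ on a slight enlargement $B' \supset B$ provides a standard Lipschitz constant $L$ for $X^{\IH}_t$, uniform in $t \in [0,1]$.

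The central algebraic observation is that $X^{\IF}$ agrees with $X^{\IH}$ on $\IH$. Indeed, for $p \in \IH \subset \IF$, corollary \ref{extension}(iii) gives ${^*X^{\IH}(p)} = X^{\IH}(p)$, which already lies in $\IH \subset \IF$, so the projection along the symplectic complement (equal to the orthogonal complement, by unitarity of $\IF$) acts as the identity. Consequently $u(t)$ solves $\dot u = X^{\IF}_t(u)$ in $\IF$, while $x(t)$ solves the same ODE by definition, and $\alpha(t) := \|x(t) - u(t)\|_{\IF}$ has initial value $\alpha(0) = \|v - w\|_{\IF} \approx 0$.

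By transfer, ${^*X^{\IH}_t}$ is $^*$-Lipschitz with constant $L$ on $^*B'$, and since orthogonal projection onto $\IF$ is norm-nonincreasing, $X^{\IF}_t$ inherits the same Lipschitz bound on $^*B' \cap \IF$. Hence, as long as $x(t) \in {^*B'}$, we have $\dot\alpha(t) \leq L\alpha(t)$, and the $^*$-version of Gronwall's inequality (valid by transfer from finite dimensions) yields $\alpha(t) \leq \alpha(0)e^{Lt}$, which remains infinitesimal throughout $[0,1]$ because $L$ is standard and $\alpha(0) \approx 0$.

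The one genuinely non-trivial step is the bootstrap guaranteeing that $x(t)$ never escapes $^*B'$. Consider the set $T := \{t \in [0,1] : \forall s \in [0,t],\ x(s) \in {^*B'}\}$, which is internal by proposition \ref{internal-definition} (being definable from the internal objects $x$ and $^*B'$) and contains a neighbourhood of $0$. On $T$ the Gronwall estimate above forces $x(s)$ to be infinitesimally close to $u(s) \in B$, hence to lie well inside $^*B'$; combining this with the transferred ODE-continuation lemma (any trajectory remaining in a closed bounded subset of the domain of the vector field extends) shows $T = [0,1]$. Thus $\|x(t) - u(t)\|_{\IF} \approx 0$ for all $t \in [0,1]$, proving that $x(t)$ is near-standard with standard part $u(t) = \phi^{\IH}_t({^{\circ}v})$.
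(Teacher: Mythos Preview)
Your argument is correct and follows the same underlying strategy as the paper---first establish that $X^{\IF}$ and $X^{\IH}$ agree on $\IH$, then deduce closeness of the flows---but you flesh out the second step via an explicit Gronwall-plus-bootstrap argument where the paper simply writes ``this in turn immediately implies''. Your observation that $X^{\IF}(p)=X^{\IH}(p)$ \emph{exactly} for $p\in\IH$ (because $X^{\IH}(p)\in\IH\subset\IF$ is fixed by the projection) is in fact sharper than the paper's roundabout norm comparison, which only extracts $X^{\IF}(p)\approx X^{\IH}(p)$.

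One technical point deserves a sentence of care: when you invoke the $^*$-version of Gronwall you need $\alpha$ to be internal, but the standard curve $u$ need not have its $^*$-extension land in $\IF$ at nonstandard times, so $\alpha(t)=\|x(t)-u(t)\|_{\IF}$ is a priori only defined for standard $t$. The fix is painless: replace $u$ by the internal curve $\bar u:=\pi_{\IF}\circ{^*u}$ and observe, using uniform continuity of $u$ on $[0,1]$, that $\sup_{t\in{^*[0,1]}}\|{^*u}(t)-\bar u(t)\|$ is an internal hyperreal which is below every standard $\epsilon>0$, hence infinitesimal; then $\bar u$ is an internal \emph{approximate} solution of $\dot z=X^{\IF}_t(z)$ with uniformly infinitesimal defect, and your Gronwall-plus-bootstrap goes through verbatim with $\bar u$ in place of $u$. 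The paper does not address this issue either, so your proof is at least as rigorous as the original.
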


\begin{proof} First it follows from proposition \ref{extension} that for every $u\in\IH\subset{^*\IH}$ we have $X^{^*\IH}(u)=X^{\IH}(u)$, which immediately implies that $X^{\IF}\approx X^{\IH}$ on $\IH\subset\IF$: For this observe that, since $$\sum_{i=1}^{2n} \<X^{\IH}(u),e_i\>^2 \leq  \sum_{i=1}^{\dim\IF} \<X^{^*\IH}(u),e_i\>^2 = |X^{\IF}(u)|^2$$ for all $n\in\IN$, it follows that $|X^{\IH}(u)|\leq ^{\circ}|X^{\IF}(u)|$, where $|\cdot|$ denotes the non-standard extension of the Hilbert space norm and $^{\circ}$ denotes the standard part map defined in proposition \ref{infinitesimal}. On the other hand, since $\IF$ is a subspace of ${^*\IH}$ and $X^{^*\IH}(u)=X^{\IH}(u)\in{^*\IH}$, we obtain $|X^{\IF}(u)|\leq |X^{\IH}(u)|$ and hence $$|X^{\IH}(u)-X^{\IF}(u)|\approx 0,\;\;\textrm{that is},\;\;X^{\IF}(u)\approx X^{\IH}(u).$$ On the other hand, since $X^{\IH}:\IH\to\IH$ is continuous at all $u\in\IH$, it follows from the non-standard characterization of convergence and continuity from proposition \ref{convergence} that $X^{^*\IH}(v)\approx X^{^*\IH}(u)=X^{\IH}(u)$ for all $v\in{^*\IH}$ with $v\approx u\in\IH$. Note that for this it is crucial that $u\in\IH$. Since $X^{\IF}$ is obtained by projecting $X^{^*\IH}$ to $\IF$, we further get for all $v\in\IF$ that $$v\approx u\in\IH\;\;\textrm{implies}\;\; X^{\IF}(v)\approx X^{\IF}(u)\approx X^{\IH}(u).$$ In other words, if $v\in\IF$ is near-standard, we proved that $X^{\IF}(v)\in\IF$ is near-standard with $^{\circ}X^{\IF}(v)=X^{\IH}({^{\circ}v}).$ But this in turn immediately implies that, if $v\approx u\in\IH$, then the resulting Hamiltonian flows satisfy $\phi^{\IF}_t(v)\approx\phi^{\IH}_t(u)$ for all $t\in\IR$. \end{proof}

\begin{remark} If we would know that $\phi^{\IH}_t$ can indeed be \emph{uniformly} approximated by finite-dimensional flows $\phi^n_t$, then by the same arguments we would get that $\phi^{\IF}_t\approx\phi^{^*\IH}_t$ does not only hold on $\IH\subset\IF$, but on all of $\IF$ (indeed it holds on all of $^*\IH$). The reason is that in our arguments above we would not need to fix the point $u\in\IH$. Note that in this case it is an easy exercise to show that the non-squeezing theorem for the *-finite-dimensional flow $\phi^{\IF}_t$, which simply holds by the transfer principle, immediately implies that non-squeezing holds for $\phi^{^*\IH}_t$ and hence, using the transfer principle backwards, also for the original infinite-dimensional flow $\phi^{\IH}_t$. On the other hand, it can indeed already be shown easily, see \cite{Bo2}, \cite{CKS+}, that the uniform approximatibility by finite-dimensional flows implies non-squeezing for the approximated infinite-dimensional flow. \end{remark}

Along the same lines, one can show that an analogous results holds true for all derivatives $\phi^{\IF}_t$ and $\phi^{\IH}_t$. Note that, by the transfer principle, for every standard $k\in\IN$ and every standard $u\in\IH$ the $k$.th derivative $T^k\phi^{^*\IH}_t(u)$ of the non-standard extension $\phi^{^*\IH}_t={^*\phi}^{\IH}_t$ of the Hamiltonian flow is given by the non-standard extension $^*(T^k\phi^{\IH}_t(u))$ of the $k$.th derivative $T^k\phi^{\IH}_t(u)$. Since no confusion is likely to arise, in what follows we will make no distinction between $T^k\phi^{\IH}_t(u)$ and its non-standard extension.  

\begin{proposition}\label{derivative} For every $k\in\IN$ and every near-standard $v\in\IF$ the $k$.th derivative $T^k\phi^{\IF}_t(v)$ of the *-finite-dimensional flow $\phi^{\IF}_t$ at $v\in\IF$ agrees, up to an infinitesimal error, with (the restriction to $\IF\subset {^*\IH}$ of the non-standard extension of) the $k$.th derivative $T^k\phi^{\IH}_t({^{\circ}v})$ of the infinite-dimensional flow $\phi^{\IH}_t$ at its standard part ${^{\circ}v}\in\IH\subset\IF$. In particular, for every limited $w^1,\ldots,w^k\in\IF$ it holds that $$T^k\phi^{\IF}_t(v)\cdot (w^1\otimes\ldots\otimes w^k)\;\approx\; T^k\phi^{\IH}_t({^{\circ}v})\cdot (w^1\otimes\ldots\otimes w^k),$$ and if $w^1,\ldots,w^k\in\IF$ are even near-standard, we have $$^{\circ}(T^k\phi^{\IF}_t(v)\cdot (w^1\otimes\ldots\otimes w^k)) = T^k\phi^{\IH}_t({^{\circ}v})\cdot({^{\circ}w^1}\otimes\ldots\otimes{^{\circ}w^k}).$$
\end{proposition}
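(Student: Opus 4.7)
The plan is to proceed by induction on $k$, with the base case $k=0$ being exactly the previous proposition. For the inductive step I would mimic the strategy used there, but now at the level of the $k$-th variational equation that governs $T^k\phi_t$. The first move is to split the asserted infinitesimal identity into two links,
\[
T^k\phi^{\IF}_t(v)\cdot(w^1\otimes\cdots\otimes w^k)\;\approx\;T^k\phi^{^*\IH}_t(v)\cdot(w^1\otimes\cdots\otimes w^k)\;\approx\;T^k\phi^{\IH}_t({^{\circ}v})\cdot(w^1\otimes\cdots\otimes w^k),
\]
and deal with each link separately.

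The right link is the easier one: smoothness of $H_t$ makes the map $u\mapsto T^k\phi^{\IH}_t(u)$ continuous from $\IH$ into the Banach space of bounded $k$-multilinear forms $\IH^k\to\IH$, while for every standard $u\in\IH$ transfer gives $T^k\phi^{^*\IH}_t(u)={^*}(T^k\phi^{\IH}_t(u))$. The non-standard characterization of continuity from Proposition \ref{convergence} then yields $T^k\phi^{^*\IH}_t(v)\approx T^k\phi^{\IH}_t({^{\circ}v})$ in the $*$-operator norm whenever $v\in\IF$ is near-standard, hence pointwise agreement up to infinitesimals on limited tensors $w^1\otimes\cdots\otimes w^k$.

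The left link is handled by comparing variational equations. Differentiating the identity $\partial_t\phi_t=X(\phi_t)$ a total of $k$ times via Fa\`a di Bruno yields, for each $k$, a linear Cauchy problem of the form
\[
\tfrac{d}{dt}T^k\phi_t(v)\;=\;TX(\phi_t(v))\cdot T^k\phi_t(v)\;+\;Q_k\bigl(T^{\leq k}X(\phi_t(v)),\;T^{<k}\phi_t(v)\bigr),
\]
with a universal polynomial $Q_k$; by transfer this ODE holds both for $\phi^{^*\IH}$ on ${^*\IH}$ and, with the projected data $X^{\IF}=\pi_{\IF}X^{^*\IH}|_{\IF}$, for $\phi^{\IF}$ on $\IF$. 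Subtracting, the difference $\Delta_t:=T^k\phi^{^*\IH}_t(v)-T^k\phi^{\IF}_t(v)$ satisfies $\dot\Delta_t=A_t\Delta_t+B_t$ where $A_t=TX^{^*\IH}(\phi^{\IF}_t(v))$ has limited operator norm, and the forcing $B_t$ decomposes into (a) the continuity error at the base point, infinitesimal because $\phi^{^*\IH}_t(v)\approx\phi^{\IF}_t(v)$ by the previous proposition and $TX^{\IH}$ is continuous at ${^{\circ}v}$, (b) inductive errors involving lower-order differences $T^j\phi^{^*\IH}_t(v)-T^j\phi^{\IF}_t(v)$ for $j<k$, infinitesimal on limited inputs by hypothesis, and (c) projection errors of the form $(I-\pi_{\IF})\circ{^*T^jX^{\IH}}(\phi^{\IF}_t(v))$ applied to the relevant tensors. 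Since the whole problem sits in the $*$-finite-dimensional $\IF$, Gronwall's inequality on $[0,1]$ holds by transfer and propagates infinitesimality from $B_t$ to $\Delta_t$, yielding the left link. The second assertion of the proposition then follows by taking standard parts and using Proposition \ref{limited-infinitesimal} together with continuity of the standard multilinear map $T^k\phi^{\IH}_t({^{\circ}v})$ in its arguments.

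The main obstacle I expect is the projection error (c): one has to show that $(I-\pi_{\IF})\circ{^*T^jX^{\IH}}(\phi^{\IF}_t(v))$ is infinitesimal on the relevant inputs, even though $\IF$ is not invariant under the $*$-extended derivatives of $X^{\IH}$ in general. The key is that near-standardness of $\phi^{\IF}_t(v)$, guaranteed by the previous proposition, combined with continuity of $T^jX^{\IH}:\IH\to\mathcal{M}_j(\IH,\IH)$, makes ${^*T^jX^{\IH}}(\phi^{\IF}_t(v))$ infinitesimally close in operator norm to the $*$-extension of the standard bounded multilinear map $T^jX^{\IH}({^{\circ}\phi^{\IF}_t(v)})$, whose range on standard arguments lies in $\IH\subset\IF$ and is therefore fixed by $\pi_{\IF}$; feeding this back into the induction hypothesis shows that the tensors entering (c) are themselves near-standard, so the projection loss is infinitesimal. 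This Pythagorean projection control, propagated through Gronwall, is the technical heart of the inductive step, and is exactly parallel to the identity $\|X^{\IH}(u)\|=\,{^{\circ}}\|X^{\IF}(u)\|$ used in the proof of the previous proposition.
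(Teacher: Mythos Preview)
Your overall architecture—induction on $k$, the two-link split through $T^k\phi^{^*\IH}_t$, the variational equation, and Gronwall—matches the paper's strategy. You also correctly isolate the projection error (c) as the crux. But your proposed resolution of (c) has a genuine gap.

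You argue that the inputs to $(I-\pi_{\IF})\circ{^*T^jX^{\IH}}$, namely the tensors $T^{|I_l|}\phi^{\IF}_t(v)\cdot w_{I_l}$, are near-standard ``by feeding back the induction hypothesis.'' This does not follow when the $w^i$ are merely limited. Already for $k=1$ at $t=0$ one has $T\phi^{\IF}_0(v)\cdot w = w$, which is limited but in general \emph{not} near-standard; and applying the $*$-extension of a standard bounded linear map to a limited non-near-standard vector need not produce anything near-standard (take the identity, or the shift on $\ell^2$, where $(I-\pi_{\IF}){^*A}e_{\dim\IF}=e_{\dim\IF+1}$ has norm $1$). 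So the step ``the tensors entering (c) are themselves near-standard, hence the projection loss is infinitesimal'' fails for the first assertion of the proposition.

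The paper closes this gap differently: it does not try to make the \emph{inputs} near-standard, but instead shows that the \emph{operator} $(I-\pi_{\IF})\circ T^jX^{^*\IH}(u)$ is itself infinitesimal in a Hilbert--Schmidt sense, so that it annihilates any limited tensor. Concretely, the paper uses that
\[
\sum_{i_1,\ldots,i_{k+1}}\bigl\langle T^{k+1}H(u),\,e_{i_1}^*\otimes\cdots\otimes e_{i_{k+1}}^*\bigr\rangle^2<\infty,
\]
i.e.\ the $\IH^*\otimes\cdots\otimes\IH^*$-norm of $T^{k+1}H(u)$ is finite, which forces the tail $\sum_{j>\dim\IF}\sum_{i}\langle T^kX^{^*\IH}(u),e_i^*\otimes e_j\rangle^2\approx 0$ and hence $\|(I-\pi_{\IF})T^kX^{^*\IH}(u)\cdot w\|\approx 0$ for every limited $w$. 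That is the correct ``Pythagorean'' parallel to the $k=0$ case you allude to in your last sentence: for $k=0$ the relevant finite quantity is $\|X^{\IH}(u)\|^2=\sum_i\langle X^{\IH}(u),e_i\rangle^2$, and for $k\geq 1$ it is the Hilbert--Schmidt norm of $T^kX^{\IH}(u)$, not the operator norm. Your near-standardness-of-inputs route cannot substitute for this summability input.
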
 

\begin{proof} The proof relies on the fact that, for every $k\in\IN$ and at every standard $u\in\IH\subset\IF$, the $k$.th derivative of $X^{\IF}$ agrees, up to an infinitesimal error, with the $k$.th derivative of the non-standard extension $X^{^*\IH}$ of the Hamiltonian vector field, i.e., $$T^k X^{\IF}(u)\cdot (w^1\otimes\ldots\otimes w^k)\;\approx\; T^k X^{^*\IH}(u)\cdot (w^1\otimes\ldots\otimes w^k)$$ for all limited $w^1,\ldots,w^k\in\IF$. While the case for $k=0$ was proven above, for the general case let us restrict to the case $k=1$ in order to keep the notation simpler. \\

First, in analogy to above, we use that $$\sum_{i=1}^{2n} \<T X^{\IH}(u),e_i\otimes e_j^*\>^2 \leq  \sum_{i=1}^{\dim\IF} \<TX^{^*\IH}(u),e_i\otimes e_j^*\>^2$$ for all $n\in\IN$ and all $1\leq j\leq \dim\IF$, where $(e_i^*)$ denotes the *-finite-dimensional dual basis to the orthonormal basis $(e_i)$ of $\IF$. On the other hand, since the $\IH^*\otimes\IH^*$-norm of $T^2 H(u)$ is finite, it follows directly that the limit  $$\sum_{i,j=1}^{\infty} \<TX^{\IH}(u),e_i\otimes e_j^*\>^2 =|TX^{\IH}(u)|^2<\infty$$ exists. Since the latter agrees with $|TX^{^*\IH}(u)|^2$ which is greater than or equal to $|TX^{\IF}(u)|^2$ given by $$|TX^{\IF}(u)|^2=\sum_{i,j=1}^{\dim\IF} \<TX^{^*\IH}(u),e_i\otimes e_j^*\>^2$$ as $X^{\IF}$ is obtained by projecting $X^{^*\IH}$ to $\IF\subset{^*\IH}$, we get that $$\sum_{j=1}^{\dim\IF}\<TX^{^*\IH}(u),e_j^*\> \cdot e_j^*\approx \sum_{j=1}^{\dim\IF}\<TX^{\IF}(u),e_j^*\>\cdot  e_j^*$$ and hence $TX^{^*\IH}(u)\cdot w\approx TX^{\IF}(u)\cdot w$ for all limited $v\in\IF$ using proposition \ref{limited-infinitesimal}. \\

In order to finish the proof, we again need to observe that the statement holds at near-standard points. Since $X^{\IH}$ is continuously differentiable, it follows from proposition \ref{convergence} that $$TX^{^*\IH}(v)\cdot w\approx TX^{^*\IH}(u)\cdot w\;\;\textrm{for all}\;\;v\approx u\in\IH\subset\IF.$$ On the other hand, since $TX^{\IF}$ is obtained by projecting $TX^{^*\IH}$ to $\IF\subset{^*\IH}$, we also have $TX^{\IF}(v)\cdot w\approx TX^{\IF}(u)\cdot w$ for all $v\approx u\in\IH\subset\IF$, which together gives $TX^{^*\IH}(v)\cdot w\approx TX^{\IF}(v)\cdot w$ for all near-standard $v\in\IF$ and limited $w\in\IF$. Since $$T\phi^{\IF}_0(v)\cdot w= w = T\phi^{^*\IH}_0(v)\cdot w$$ as well as $$\del_t T\phi^{^*\IH}_t(v)\cdot w=TX^{^*\IH}(v)\cdot w \;\;\textrm{and}\;\;\del_t T\phi^{\IF}_t(v)\cdot w=TX^{\IF}(v)\cdot w,$$ it follows that we indeed have $$T\phi^{^*\IH}_t(v)\cdot w\approx T\phi^{\IF}_t(v)\cdot w$$ for all near-standard $v\in\IF$ and limited $w\in\IF$. \\

The last statement in the proposition follows by simply observing that $T\phi^{^*\IH}_t(u)\cdot v = T\phi^{\IH}_t(u)\cdot v$ for all standard $u,v\in\IH\subset\IF$ by corollary \ref{extension}. \end{proof}

\section{Non-squeezing in infinite dimensions using non-standard models}

Instead of studying in how far Gromov's theory of pseudoholomorphic curves generalizes from finite to infinite dimensions, in this paper we show how non-standard model theory provides an alternative and effective way to generalize Gromov's result from finite to infinite dimensions. Here the crucial idea is that, using the transfer principle, it follows that all statements that Gromov used in his non-squeezing proof have analogues in the *-finite-dimensional setup. Apart from the monotonicity theorem for minimal surfaces, this in particular applies to Gromov's result about the existence of pseudoholomorphic spheres. While this immediately proves that non-squeezing holds true for the time-one map $\phi^{\IF}_1$ of the *-finite-dimensional flow, in this section we show that, in order to prove non-squeezing for the infinite-dimensional flow map $\phi^{\IH}_1$, it just remains to be shown that the corresponding $J$-holomorphic sphere used in Gromov's proof is near-standard. \\

In strict analogy to the construction above, let $J^{\IF}_t:=(\phi^{\IF}_t)_*J_0$ denote the family of compatible almost complex structures on $\IF$ obtained as pushforward of the complex structure $J_0$ on $\IF$ under the *-finite-dimensional Hamiltonian flow $\phi^{\IF}_t$ on $\IF$, where we again abbreviate $J^{\IF}=J^{\IF}_t$. Using this homotopy together with the same cut-off function as above to interpolate between $J^{\IF}$ on $\del B^2(R)$ and $J_0=J^{\IF}_0$, the push-forward compatible almost complex structure $J^{\IF}$ on ${^*B}^2(R)\times \IF/{^*\IC}\subset\IF$ can be extended to a compatible almost complex structure $\tilde{J}^{\IF}$ on ${^*S}^2(\sigma)\times\IF/{^*\IC}$, where we identify ${^*S}^2(\sigma)$ with ${^*S}^2$. The following proposition is an immediate consequence of Gromov's existence result for pseudoholomorphic spheres after applying the transfer principle. 

\begin{proposition}
There exists a *-differentiable family $t\mapsto u_t$ of $J^{\IF}_t$-holomorphic spheres, $t\in[0,T^{\IF}]$ where for $t=0$ the $J_0$-holomorphic curve is given by the inclusion ${^*S}^2\to {^*S}^2\times\{0\}\subset {^*S}^2\times \IF/{^*\IC}$. 
\end{proposition}

We remark that in the well-known standard finite-dimensional case one can indeed only expect the analogous family of pseudoholomorphic spheres to exists for some maximal $T>0$. Indeed, while it follows from Gromov compactness that there exists a compact one-dimensional moduli space if $t$ ranges from $0$ to $1$, the projection to the time interval $[0,1]$ has singular values, in general. By applying the transfer principle for non-standard models, we now in particular know that, up to some maximal time, there exists a $\tilde{J}^{\IF}_t$-holomorphic sphere $$u:{^*S}^2\to {^*S}^2\times \IF/^*\IC,\,\,Tu+\tilde{J}^{\IF}_t(u)\cdot Tu\cdot i=0$$ which is homotopic to the embedding ${^*S}^2\to {^*S}^2\times\{0\}\subset {^*S}^2\times \IF/^*\IC$. For the uniqueness of the $\tilde{J}^{\IF}_t$-holomorphic sphere we first make the convention that this holds true up to reparametrization and we always assume that it goes through the fixed point $p_t=\phi^{\IH}_t(0)$, where we use that, without loss of generality, $\phi^{\IH}_t(0)\in Z^{\IH}(R)$. The remainder of this paper is devoted to giving a proof of the following theorem which implies our second main theorem \ref{Main2}.  \\

\begin{theorem}\label{holo-spheres} There exists some standard $T=T^{\IH}\in(0,1]$ such that for every $0\leq t<T$ the unique $\tilde{J}^{\IF}_t$-holomorphic sphere $u_t: {^*S}^2\to {^*S}^2\times \IF/{^*\IC}$ is near-standard in the sense that for every $z\in{^*S}^2$ the image point $u(z)$ is near-standard. Moreover, after applying the standard part map, one obtains a smooth $\tilde{J}^{\IH}_t$-holomorphic sphere ${^{\circ}u}_t: S^2\to S^2\times \IH/\IC$ in the standard sense and the maximal time $T^{\IH}$ can be characterized as in theorem \ref{Main2}. \end{theorem}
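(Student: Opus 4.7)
The plan is a continuation argument in the time parameter $t$. At $t=0$, the map $u_0$ takes values in ${^*S}^2\times\{0\}\subset {^*S}^2\times\IF/{^*\IC}$, where every point is near-standard by the remark after proposition \ref{infinitesimal}, so the base case is immediate. I would then show that if $u_t$ is near-standard then so is $u_{t+\Delta t}$ for every infinitesimal $\Delta t\geq 0$, and conclude first for all standard $t\in [0,1]$ by a classical open/closed connectedness argument, then for all non-standard $t\in{^*[0,1]}$ by continuity of $t\mapsto u_t$. Since $u_{t+\Delta t}(z) - u_t(z)$ equals $\Delta t\cdot \xi_t(z)$ to first order, the induction step reduces to proving that the derivative $\xi_t=\partial u_t/\partial t\in T_{u_t(z)}({^*S}^2\times\IF/{^*\IC})\cong\IF$ is near-standard at every $z\in{^*S}^2$ whenever $u_t$ already is.

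The first preparatory step is to rule out unlimited first derivatives of $u_t$. If $\|Tu_t(z_0)\|$ were unlimited at some $z_0$, then Gromov's rescaling about $z_0$ would, in the non-standard setting, extract a non-constant $\tilde{J}^{\IF}_t$-holomorphic plane through the near-standard point $u_t(z_0)$ with non-infinitesimal energy. Since by corollary \ref{almost-complex} the almost complex structure $\tilde{J}^{\IF}_t$ is infinitesimally close to $\tilde{J}^{\IH}_t$ along the near-standard image of $u_t$, the mean-value inequality for $\tilde{J}^{\IF}_t$-holomorphic curves holds with non-infinitesimal constants, and the total energy $E(u_t)$ is near-standard, equal to the transferred fixed energy class. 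This forces the bubbling scenario to contradict the energy bound exactly as in the finite-dimensional argument, with every $\epsilon$ replaced by ``non-infinitesimal''. Hence $Tu_t$ is limited uniformly on ${^*S}^2$.

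With $Tu_t$ limited and $u_t$ near-standard, the vector field $\xi=\xi_t$ satisfies, by transfer of Gromov's existence theorem, a linear Cauchy--Riemann equation of the form $(\CR + S_t)\xi_t=\eta_t$, where $S_t$ and $\eta_t$ are built from $\tilde{J}^{\IF}_t(u_t)$, $\nabla\tilde{J}^{\IF}_t(u_t)$, $\nabla_t\tilde{J}^{\IF}_t(u_t)$ and $Tu_t$. Corollary \ref{almost-complex} together with proposition \ref{derivative} yields that $S_t$ and $\eta_t$ agree up to infinitesimal error with their $\tilde{J}^{\IH}_t$-counterparts at ${^\circ}u_t$, hence are near-standard, and so is $\CR\xi_t$. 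The main obstacle, and the crux of the theorem, is then a non-standard Liouville-type statement on ${^*S}^2$: one must deduce near-standardness of $\xi_t$ from near-standardness of $\CR\xi_t$. Concretely, I would invert $\CR+S_t$ via a Green's kernel inherited by transfer from its finite-dimensional counterpart, verify that convolution of this near-standard kernel against a near-standard right-hand side produces a near-standard result, and thereby close the induction. The delicate point is showing that the analytical constants involved are non-infinitesimal and depend only on standard data, which requires precisely the limitedness of $Tu_t$ established in the previous step.

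Once every $u_t$ has been shown near-standard, the pointwise standard part ${^\circ}u_t:S^2\to S^2\times\IH/\IC$ is well defined, and I would promote it to a smooth $\tilde{J}^{\IH}_t$-holomorphic sphere in two moves. First, applying $^{\circ}(\cdot)$ to the equation $Tu_t+\tilde{J}^{\IF}_t(u_t)\cdot Tu_t\cdot i=0$ and using the intertwining ${^\circ}(\tilde{J}^{\IF}_t(u_t)\cdot w)=\tilde{J}^{\IH}_t({^\circ}u_t)\cdot{^\circ}w$ from corollary \ref{almost-complex} gives the $\tilde{J}^{\IH}_t$-holomorphic equation for ${^\circ}u_t$. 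Second, a standard elliptic bootstrap: differentiating the equation and iteratively applying interior elliptic regularity for $\CR+S_t$ controls all higher derivatives of $u_t$ by limited quantities, and the Sobolev embedding theorem then produces $C^\infty$-bounds that descend to ${^\circ}u_t$, completing the proof.
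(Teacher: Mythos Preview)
Your overall architecture matches the paper's: a continuation in $t$ starting from the near-standard $u_0$, lemma~\ref{no-bubbling} to get limited $Tu_t$ via a non-standard bubbling argument with the mean-value inequality, and lemma~\ref{stay-nearstandard} to get near-standardness of $\xi_t=\partial u_t/\partial t$ via the linear Cauchy--Riemann equation, followed by elliptic bootstrap for the smoothness of ${^\circ}u_t$. The paper also closes with a Lipschitz-type lemma for $u\mapsto\xi_u$ to make the continuation rigorous, which your sketch handles more informally.

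There is, however, a genuine gap in your treatment of $\xi_t$. You write that $S_t$ and $\eta_t$ are near-standard ``and so is $\CR\xi_t$''. But $\CR\xi_t=\eta_t-S_t\xi_t$, and to conclude that $S_t\xi_t$ is near-standard you need to know in advance that $\xi_t$ is \emph{limited}. This is not automatic: the operator $D^{\IF}=\CR+S^{\IF}$ was made invertible only by an \emph{infinitesimal} perturbation of the homotopy, so $\|(D^{\IF})^{-1}\|$ may well be unlimited, and your proposed Green's kernel inversion inherits exactly this defect---the transferred kernel need not have limited norm, so convolving a near-standard right-hand side against it does not produce a near-standard (or even limited) output. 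The paper circumvents this by passing to the \emph{standard} operator $D^{\IH}=\CR+S^{\IH}$ at ${^\circ}u_t$: although $D^{\IH}$ need not be surjective, the induced isomorphism $H^{1,p}/\ker D^{\IH}\to\im D^{\IH}$ has finite (hence limited) inverse norm, and one checks $\eta^{\IH}\in\im D^{\IH}$ via the extended linearization. This yields limitedness of $\xi_t$. Only then does the paper show $S^{\IF}\xi$ and $\eta^{\IF}$ are near-standard (using the Cauchy--Schwarz argument on the tail coefficients), deduce that $\CR\xi$ is near-standard, and finish with a Liouville-type argument: project $\xi$ onto the tail $\xi_{N_0}^\perp=\sum_{i>2N_0}\langle\xi,e_i\rangle e_i$, observe $\CR\xi_{N_0}^\perp\approx 0$, and use the Cauchy integral formula together with $\xi(z_0)=0$ to force $\xi_{N_0}^\perp\approx 0$. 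Your Green's-kernel route, as stated, does not supply the missing limitedness and should be replaced by this two-step mechanism.
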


Note that here near-standardness is defined with respect to the metric on ${^*S}^2$ inherited from the ambient Euclidean space. Using the fact, see the remark after proposition \ref{infinitesimal}, that every point on ${^*S}^2(\sigma)$ is near-standard, the characterization of near-standardness for points in $\IF$ from proposition \ref{characterization} generalizes naturally from $\IF={^*\IR}^{\dim\IF}$ to ${^*S}^2\times \IF/{^*\IC}$ after identifying $\IF/{^*\IC}$ with ${^*\IR}^{\dim\IF-2}$. We emphasize that for the proof of non-squeezing theorem \ref{Main1} we only need the near-standardness and do \emph{not} need the second half of the statement of theorem \ref{holo-spheres} about the (standard) smoothness of ${^{\circ}u}_t$ as we finish the proof of theorem \ref{Main1} in the non-standard setup.   \\

Before we turn to the proof of this theorem, let us show how this result is used to prove our main theorem \ref{Main1} on non-squeezing. As we continue to work in the *-finite-dimensional set-up, note our proof only needs the finite-dimensional version of the monotonicity theorem for minimal surfaces. \\

As in Gromov's original proof it follows from the definition of $\tilde{J}^{\IF}$ that $\Sigma:=(\phi^{\IF})^{-1}(u(^*S^2)\cap Z^{\IF}(R))$ is a minimal surface for the Euclidean metric on $\IF$. Knowing that $^*u(S^2)$ sits in an infinitesimal neighborhood of $S^2(\sigma)\times \IH/\IC$ in $^*S^2(\sigma)\times \IF/^*\IC$ by theorem \ref{holo-spheres}, that is, every point in $\phi^{\IF}(\Sigma)$ is infinitesimally close to $Z^{\IH}(R)\subset Z^{\IF}(R)$, it follows from $\phi^{\IH}(B^{\IH}(r))\subset Z^{\IH}(R)$ and $\phi^{\IF}\approx \phi^{\IH}$ on $B^{\IH}(r)\subset B^{\IF}(r)$ that, up to an infinitesimal error, the area of $\Sigma\cap B^{\IF}(r)$ must be greater or equal to $r^2\pi$. Note that here we further use that, up to an infinitesimal error, $\Sigma$ passes through the origin $0\in B^{\IH}(r)\subset B^{\IF}(r)$ and that the area estimate for minimal surfaces through the origin of $B^{\IF}(r)$ continues to hold by the transfer principle. On the other hand, since the area of $u(^*S^2)$ is equal to the area of $^*S^2(\sigma)$, it follows that $r^2\pi\leq R^2\pi + \epsilon$, which still implies $r\leq R$ as $\epsilon>0$ can be chosen arbitrarily small.

\begin{remark} While non-squeezing for $\IR^{2n}$ implies non-squeezing for $\IR^{2m}$ for all $m\leq n$ as $B^{2n}(r)\subset B^{2m}(r)\times\IR^{2n-2m}$ and $Z^{2n}(R)=Z^{2m}(R)\times\IR^{2n-2m}$, in contrast we emphasize that non-squeezing for $\IF$ (in the non-standard sense) does \emph{not} automatically imply non-squeezing for $\IH$. In essence, this follows from the fact that, since $\IH$ is not an internal set in the non-standard model, there is no complementary (internal) subspace of $\IH$ in $\IF$.\end{remark}

In order to prove theorem \ref{holo-spheres}, the crucial ingredient is the relation between the family of non-standard almost complex structures $\tilde{J}^{\IF}$ on ${^*S}^2\times\IF/{^*\IC}$ and the almost complex structure $\tilde{J}^{\IH}$ on $S^2\times\IH/\IC$. As both are directly defined using the first derivatives $T\phi^{\IF}_t$ and $T\phi^{\IH}_t$ of the flows $\phi^{\IF}$ and $\phi^{\IH}$ on $\IF$ and $\IH$, respectively, the following lemma is an immediate corollary to proposition \ref{derivative}. 

\begin{lemma}\label{almost-complex} For every near-standard $v\in{^*S}^2\times\IF/{^*\IC}$ we have that $\tilde{J}^{\IF}(v)$ agrees with (the restriction of the non-standard extension of) $\tilde{J}^{\IH}(^{\circ}v)$ up to an infinitesimal error; an analogous result holds true for the $k$.th derivatives of $\tilde{J}^{\IF}$ and $\tilde{J}^{\IH}$ at near-standard points $v\in\IF$ for all $k\in\IN$. In particular, for all limited $w\in T_v({^*S}^2\times\IF/{^*\IC})\cong\IF$ we have $$\tilde{J}^{\IF}(v)\cdot w \approx {^*\tilde{J}}^{\IH}(^{\circ}v)\cdot w,$$ and for every near-standard $w\in T_v({^*S}^2(\sigma)\times\IF/{^*\IC})\cong\IF$ we have that $\tilde{J}^{\IF}(v)\cdot w\in T_v({^*S}^2(\sigma)\times\IF/{^*\IC})\cong\IF$ is near-standard with $$^{\circ}(\tilde{J}^{\IF}(v)\cdot w)\;=\; \tilde{J}^{\IH}({^{\circ}v})\cdot {^{\circ}w}.$$ \end{lemma}

The proof of theorem \ref{holo-spheres} relies on the following observations: \\

First, since the compatible almost complex structure $\tilde{J}^{\IF}_0$ agrees with the standard product complex structure $J_0$ on ${^*S}^2\times \IF/{^*\IC}$, we know that for $t=0$ the unique $J_0$-holomorphic curve is given by the inclusion ${^*S}^2\to {^*S}^2\times\{0\}\subset {^*S}^2\times \IF/{^*\IC}$ and hence is infinitesimally close to $S^2\times\{0\}\subset S^2\times \IH/\IC$. In order to ensure that near-standardness is preserved as $t$ is increased, the crucial ingredient is to show that the derivative $\xi=\xi_t=\del u_t/\del t$ of the path is near-standard as long as the underlying holomorphic sphere $u_t$ is already known to be near-standard. Since $\xi=\xi_t$ solves a linear Cauchy-Riemann-type equation $\CR\xi+S_t\xi=\eta_t$, we in particular need to ensure that the linearized Cauchy-Riemann operator $D_t=\CR+S_t$ has an inverse with limited norm, so that, in particular, the solution is unique. While we show that this holds true up to some maximal time $T$, we emphasize that the near-standardness of $u=u_t$ is also needed to show that the first derivatives of $u$ are limited numbers, that is, not unlimited in the above sense. We exclude unlimitedness of the first derivatives by slightly modifying Gromov's bubbling-off argument, building on the finiteness of energy (in the standard sense) and that the mean-value inequality holds with non-infinitesimal constants. After establishing the limitedness of the first derivative, we use elliptic regularity (building again on the near-standardness of $u$) and the Sobolev embedding theorem to show that we actually obtain a smooth $\tilde{J}^{\IH}$-holomorphic sphere in the standard sense, where we further explore the relation between derivatives in the standard and in the non-standard sense. We emphasize that the existence of the underlying Banach manifold of maps $\BB^{1,p}=H^{1,p}({^*S}^2,{^*S}^2\times \IF/{^*\IC})$ is ensured, as usual, by the transfer principle and the well-known result in finite dimensions. However, we emphasize that now weak derivatives and the finiteness of their Lebesgue norms are clearly to be understood in the non-standard sense. \emph{While we will only need to employ that every result (like the Sobolev embedding theorem) has an analogue in the non-standard model by transfer, we need to deal with the fact that all arising norms will a priori be unlimited in the sense of proposition \ref{infinitesimal}.} \\

Summarizing, the proof of the theorem relies on the following two lemmas. 

\begin{lemma}\label{no-bubbling} Assume that the $\tilde{J}^{\IF}_t$-holomorphic sphere $u_t: {^*S}^2\to {^*S}^2\times \IF/{^*\IC}$ is near-standard. Then $u_t$ has limited first derivatives, i.e., the supremum norm $\|Tu_t\|_{\infty}$ (induced by the Riemannian metric $g^{\IF}_t=\omega(\cdot,\tilde{J}^{\IF}_t\cdot)$ on ${^*S}^2\times \IF/{^*\IC}$) is a limited *-real number.  After applying the standard part map, the resulting map ${^{\circ}u}_t: S^2\to S^2\times \IH/\IC$ is indeed smooth in the standard sense and satisfies $$T{^{\circ}u}_t+\tilde{J}^{\IH}_t(^{\circ}u)\cdot T{^{\circ}u}_t\cdot i=0.$$ \end{lemma}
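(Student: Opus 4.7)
The plan is to establish the two claims in order: limitedness of $\|Tu_t\|_\infty$ first, and then smoothness of the standard part together with the $\tilde{J}^{\IH}_t$-holomorphic equation. The guiding principle is that the hypothesis that $u_t$ is near-standard lets me replace $\tilde{J}^{\IF}_t(u_t(z))$ by the non-standard extension of $\tilde{J}^{\IH}_t({^\circ u_t(z)})$ up to an infinitesimal error, via Corollary \ref{almost-complex}, so that the constants coming from the classical finite-dimensional theory of $J$-holomorphic curves (mean-value inequality, elliptic regularity, removal of singularities) transfer to the $*$-finite-dimensional setting and remain non-infinitesimal.

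For the first part I would argue by contradiction, adapting Gromov's bubbling-off. Assume $M := \|Tu_t\|_\infty$ is unlimited and pick $z_0 \in {^*S}^2$ with $|Tu_t(z_0)|$ of order $M$. Rescale via $v(\zeta) := u_t(z_0 + \zeta/M)$, which is $\tilde{J}^{\IF}_t$-holomorphic on the $*$-disc of radius $M$, satisfies $|Tv(0)|$ of order $1$, and has $|Tv| \leq 1$ on every standard disc $D_r$. Because $u_t$ is near-standard, $p := {^\circ u_t(z_0)}$ exists in $S^2 \times \IH/\IC$, and on each $D_r$ the map $v$ stays infinitesimally close to $p$. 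In a standard chart at $p$ the classical mean-value inequality for $\tilde{J}^{\IH}_t$-holomorphic curves provides a standard constant $\hbar(p) > 0$ with $\int_{D_r} |Tv|^2 \geq \hbar(p)$, and by Corollary \ref{almost-complex} together with transfer this persists for $v$ up to an infinitesimal loss. Translating back to $u_t$, a non-infinitesimal quantum of energy is concentrated in the infinitesimal disc $D_{r/M}(z_0)$. Since the total energy
\[
E(u_t) \;=\; \int_{{^*S}^2} u_t^* \omega^{\IF} \;=\; \sigma^2\pi
\]
is a standard finite number fixed by the homology class (the equality transfers), this concentration can happen only finitely often; equivalently, applying the standard part map to $v$ yields a non-constant $J_0$-holomorphic plane of finite energy in the Euclidean model of $T_p({^*S}^2\times\IF/{^*\IC})$, which by removal of singularities extends to a $J_0$-holomorphic sphere in a complex vector space and must therefore be constant—contradicting $|Tv(0)|$ being non-infinitesimal.

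For the second part I would carry out an elliptic bootstrap chart by chart at standard points. Fix a standard $z \in S^2$ and work in a standard coordinate chart around $p := {^\circ u_t(z)}$; because $|Tu_t|$ is now limited and $u_t$ is near-standard, $u_t$ remains infinitesimally close to a fixed standard neighbourhood of $p$ on a small standard disc around $z$. Rewrite $Tu_t + \tilde{J}^{\IF}_t(u_t)\cdot Tu_t\cdot i = 0$ in the chart as $\bar\partial u_t = Q_t(u_t)(Tu_t)$ for a smooth nonlinearity $Q_t$; by Corollary \ref{almost-complex}, $Q_t$ is infinitesimally close to its standard counterpart on near-standard inputs. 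The classical interior $W^{2,p}$-estimate for $\bar\partial$ transfers to the $*$-finite-dimensional setting, so the limited $L^\infty$-bound on $Tu_t$ produces a limited $W^{2,p}$-bound on $u_t$ on a smaller disc, and iteration yields limited $W^{k,p}$-bounds for every standard $k$. Differentiating the equation and using that $\tilde{J}^{\IF}_t \approx {^*\tilde{J}^{\IH}_t}$ at near-standard arguments (Corollary \ref{almost-complex}) then shows that every derivative $T^j u_t(z)$ at a standard $z$ is not only limited but near-standard. The standard parts ${^\circ(T^j u_t)}$ therefore exist and, via the standard Sobolev embedding $W^{k,p} \hookrightarrow C^{k-1}$ applied at each finite $k$, assemble into a smooth map ${^\circ u_t}: S^2 \to S^2 \times \IH/\IC$. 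Taking the standard part of the holomorphicity equation and again using Corollary \ref{almost-complex} gives $T{^\circ u_t} + \tilde{J}^{\IH}_t({^\circ u_t})\cdot T{^\circ u_t}\cdot i = 0$.

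The hard part will be making the non-standard bubbling argument rigorous. In the rescaled picture the target has unlimited $*$-dimension, so the usual compactness step (Arzelà–Ascoli extraction of a bubble) must be replaced by a direct application of the standard part map, which requires limited derivatives of $v$ uniformly on standard compact subsets and a non-trivial standard-part limit. Near-standardness of $u_t$ is essential precisely here: it keeps the mean-value constant, the cut-off used to extend $\tilde{J}^{\IH}_t$ to $\tilde{J}^{\IF}_t$, and the elliptic constants all non-infinitesimal, while preventing the putative bubble from slipping off into the uncontrolled non-standard directions of $\IF/{^*\IC}$.
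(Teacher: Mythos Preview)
Your elliptic bootstrap for the second claim mirrors the paper's Step 2 and is fine. The real divergence is in the bubbling argument, where there is a genuine gap.

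Your claim that ``on each $D_r$ the map $v$ stays infinitesimally close to $p$'' is incorrect: from $\|Tv\|\le 1$ one only gets $\|v(\zeta)-v(0)\|\le|\zeta|$, so on a standard disc $D_r$ the map $v$ stays within \emph{standard} distance $r$ of $p$, not infinitesimal distance. Consequently the standard part ${^\circ v}$ is a map into $S^2\times\IH/\IC$ equipped with the non-constant structure $\tilde{J}^{\IH}_t$, not into the linear tangent space at $p$ with $J_0$; the Liouville step ``a $J_0$-holomorphic sphere in a complex vector space must be constant'' therefore does not apply. One could try to salvage this via removal of singularities for $\tilde{J}^{\IH}_t$-curves in the infinite-dimensional target together with a homological energy-quantisation argument, but even then you must show ${^\circ v}$ is non-constant, and for that you need $Tv(0)$---a unit vector in $\IF$---to be near-standard, which is exactly what has not yet been established at this stage.

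The paper avoids all of this by never taking standard parts in Step 1. After rescaling to $v$ with $\|Tv(0)\|=1$ and $\|Tv\|\le 1$, it argues via the length--energy inequality that the boundary loops $\gamma_r(\theta)=v(re^{2\pi i\theta})$ cannot all have length bounded below by a fixed non-infinitesimal $\delta$ over an unlimited range of radii (else the energy of $v$ would be unlimited, contradicting the fixed standard energy of $u$). The spillover principle then yields an \emph{unlimited} $R$ with $\ell(\gamma_R)\approx 0$. Since $v^*\omega$ is exact on the disc, Stokes gives $E(v|_{{^*B}^2(R)})\approx 0$, and the transferred mean-value inequality---whose constants are non-infinitesimal precisely because $u$ is near-standard, via Corollary \ref{almost-complex}---forces $\|Tv(0)\|\approx 0$, the desired contradiction. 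This argument needs only that $\|Tv(0)\|=1$, not that $Tv(0)$ is near-standard.
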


\begin{lemma}\label{stay-nearstandard} There exists some standard $T>0$ with the following property: For $0\leq t<T$ assume that the $\tilde{J}^{\IF}_t$-holomorphic sphere $u_t: {^*S}^2\to {^*S}^2\times \IF/{^*\IC}$ is near-standard. Then the derivative $\xi_t=\del u_t/\del t$ of the path $t\mapsto u_t$ is near-standard. \end{lemma}

The near-standardness of all the tangent vectors $\xi_t=\del u_t/\del t$, together with a Lipschitz-type estimate which we will give in the last section, indeed shows that the $\tilde{J}^{\IF}_t$-holomorphic spheres in ${^*S}^2\times \IF/{^*\IC}$ stay infinitesimally close to $S^2\times\IH/\IC$ as $t$ runs from $0$ to $T$. Finally we show that the nearstandardness of $\xi_t$ implies that the path $t\mapsto u_t$ is indeed $C^0$-continuous and we prove the characterization of the maximal time $T$ using pseudoholomorphic spheres as outlined in theorem \ref{Main2}.  

\section{Nearstandard $J$-holomorphic spheres are smooth}

In this section we give a proof of lemma \ref{no-bubbling}. Since here the $t$-parameter is fixed, we will from now on drop the subindex $t$ for notational simplicity.  For the proof we use a non-standard version of the classical bubbling-off argument from (\cite{MDSa}, section 4.2) together with elliptic regularity from (\cite{MDSa}, section B.4). We emphasize that it will turn out to be crucial that the $\tilde{J}^{\IF}$-holomorphic sphere $u$ is near-standard in order to ensure that all appearing constants are limited and non-infinitesimal using corollary \ref{almost-complex}.  Furthermore we use here that the almost complex structure $\tilde{J}^{\IH}$ and the induced Riemannian metric $g^{\IH}$ on $S^2\times\IH/\IC$ are smooth in the standard sense. \\

\noindent\emph{Step 1: Limitedness of $Tu$ using bubbling-off}\\  

Note that, while it follows from the transfer principle that the $\tilde{J}^{\IF}$-holomorphic sphere $u$ is smooth, in particular, the supremum norm of its first derivative is finite, this statement clearly only holds in the non-standard sense. In particular, we in general need to expect that this supremum norm is an unlimited *-real number. In this section we first show that the supremum norm of $Tu$ indeed is limited. The crucial ingredient is that we know a priori that the energy $E(u)$ of $u$ is limited, as it equals $R^2\pi+\epsilon$. We then prove the limitedness of the first derivative by contraction, replacing in the usual bubbling-off proof convergence to infinity by unlimitedness. On the other hand, in order to be able to use the mean-value inequality (\cite{MDSa}, lemma 4.3.1) in the main step, we need to ensure that the derivatives of the induced Riemannian metric $g^{\IF}={^*\omega}(\cdot,\tilde{J}^{\IF}\cdot)$ are limited and/or not infinitesimal, respectively. \\

Assume that $\|Tu\|_{\infty}=\max\{|Tu(z)|:z\in{^*S}^2\}=C$ is an unlimited *-real number and choose $z_0\in{^*S}^2$ such that $|Tu(z_0)|=C$, where we assume without loss of generality that $z_0=0$. As in the classical bubbling-off proof we define $v:{^*B}^2(\sqrt{C})\to{^*S}^2\times\IF/{^*\IC}$ by $v(z):=u(z/C)$, such that $|Tv(0)|=1$ and $|Tv(z)|\leq 1$ for all $z\in{^*B}^2(\sqrt{C})$.  Because $C\in{^*\IR}^+$ was assumed to be unlimited, note that ${^*B}^2(\sqrt{C})\subset{^*\IC}$ is a disk of unlimited radius; in particular, it contains the full complex plane $\IC$ as a subset. For each $r\in[0,\sqrt{C}]\subset{^*\IR}^+$ define $\gamma_r: {^*S}^1\to {^*S}^2\times\IF/{^*\IC}$ by $\gamma_r(t):=v(re^{2\pi t})$. \\

Assume that there exists some standard $R\in\IR^+\subset{^*}\IR^+$ such that the length $\ell(\gamma_r)$ (measured again using the Riemannian metric $g^{\IF}$ induced by $\tilde{J}^{\IF}$) satisfies $\ell(\gamma_r)\geq\delta$ for all $r\geq R$ for some \emph{non}-infinitesimal $\delta>0$. Then, analogous to Gromov's original proof of bubbling-off replacing infinite by unlimited, the energy $E(v)=\int v^*\omega$ of $v$ is unlimited by conformality. Hence we know that for all non-infinitesimal $\delta>0$ and all standard $R>0$ there exists some $r\geq R$ such that $\ell(\gamma_r)<\delta$. But now, using proposition \ref{spillover}, there must exist some unbounded $0<R\leq \sqrt{C}$ such that the length of the corresponding loop is infinitesimal, $\ell(\gamma_R)\approx 0$. On the other hand, since the pullback of the symplectic form under the restriction $v_R$ of $v$ to ${^*B}^2(R)\subset {^*B}^2(\sqrt{C})$ is exact, it follows from Stokes' theorem for the energy of $v_R$ that $E(v_R)=\int v_R^*\omega \approx 0$. \\

In order to get the required contradiction, it remains to use the mean-value inequality as in Gromov's original proof. As mentioned above, this is point where it becomes important that the image of $u$ is near-standard, since this implies, by corollary \ref{almost-complex}, that on the image of $u$ the derivatives of $g^{\IF}$ agree with the derivatives of $g^{\IH}=\omega(\cdot,\tilde{J}^{\IH}\cdot)$ up to infinitesimal error. Since $g^{\IH}$ is smooth in the standard sense, we get that the derivatives of $g^{\IF}$ on the image of $u$ have limited supremum norm, which in turn proves that the local energy bound in (\cite{MDSa}, lemma 4.3.1) is non-infinitesimal. More precisely, we get that there exists some non-infinitesimal $\delta>0$ such that $$\int_{^*B^2(R)} |Tv|^2 <\delta\;\;\Rightarrow\;\; |Tv(0)|\leq \frac{8}{r^2}\cdot \int_{^*B^2(R)}|Tv|^2.$$ In particular, from $E(v_R)\approx 0$, it hence immediately follows that $|Tv(0)|\approx 0$, in contradiction to $|Tv(0)|=1$. \\

\noindent\emph{Step 2: Smoothness of $^{\circ}u$ using elliptic regularity}\\

After applying the transfer principle to the elliptic estimates used to prove regularity of pseudoholomorphic curves in the finite-dimensional setting, we can even prove that ${^{\circ}u}$ is smooth in the standard sense. For this we again will crucially use that $u$ is near-standard so that corollary \ref{almost-complex} applies, together with the smoothness of $\tilde{J}^{\IH}$ in the standard sense. \\

To this end, fix $z\in{^*S^2}$. Denoting by $\varphi_z:{^*\IC}\to{^*S^2}$ the canonical coordinate chart mapping $0\in{^*\IC}$ to $z\in{^*S^2}$, let us define $u_z:=\diag(\varphi_{u(z)}^{-1},\Id)\circ u\circ\varphi_z$. Fix a direction $\theta\in S^1$ and define the map $f=f_{z,\theta}:{^*[0,r)}\to\IF$ by $f(x)=u_z(x\cdot e^{i\theta})$. Since $u$ is near-standard and *-smooth, the same holds true for $f$, furthermore the limitedness of the $C^{\ell}$-norm of $u$ obviously implies the corresponding result for $f$. \\

As the first step we prove that ${^{\circ}f}$ is Lipschitz continuous. After applying the transfer principle to the intermediate value theorem and using that $f$ is differentiable in the non-standard sense, note that we have for every $x<y\in{^*[0,r)}$ that $$\frac{f(y)-f(x)}{y-x}=f'(w)\,\,\textrm{for some}\,\,w\in{^*[x,y]},$$ which implies that $|f(y)-f(x)|_{\IF}\leq{^*\|f\|_{C^1}}\cdot |y-x|$. By applying the standard part map, it follows that ${^{\circ}f}$ is Lipschitz continuous, $$|{^{\circ}f}(y)-{^{\circ}f}(x)|_{\IH}\leq c_1\cdot |y-x|,$$ where the positive real number $c_1\geq 0$ is the standard part of the limited number ${^*\|f\|_{C^1}}\in{^*\IR}^+\cup\{0\}$. \\

Since the supremum norm of $Tu$ as well as the area of ${^*S}^2$ are limited, it follows by transfer that the $(1,p)$-norm of $u$ is a limited number. Note that here and below all norms are understood in the non-standard sense; the only thing that we need in our proof is that every result about them which hold in the standard finite-dimensional case also hold in the non-standard model by the transfer principle. As in the proof of (\cite{MDSa}, theorem B.4.1) get from the elliptic estimates that also the $(k,p)$-norm of $u$ is limited for every standard natural number $k$. For the induction step observe that the composition $\tilde{J}^{\IF}\circ u$ has a limited $(k-1,p)$-norm if this holds for $u$ and for every standard $\ell$ the $\ell$.th derivative of $\tilde{J}^{\IF}$ has a limited supremum norm on the image of $u$. In order to ensure the latter, it simply suffices to use corollary \ref{almost-complex}, together with the fact the supremum norm of the $\ell$.th derivative of $\tilde{J}^{\IH}$ on the image of $u$ is finite in the standard and hence limited in the non-standard sense. Here we use that we already know that $^{\circ}u$ is continuous and hence the image of $^{\circ}u$ is compact (in the standard sense). \\

Finally, after transferring the Sobolev embedding theorem, we get that the supremum norm of the $\ell$.th derivative of $u$ is limited for every standard natural number $\ell$. In the same way as the limitedness of the $C^1$-norm could be used to prove the continuity of ${^{\circ}u}$, we can show that the limitedness of the $C^{\ell}$-norms in the non-standard sense for all standard $\ell\in\IN$ can be used to prove that $^{\circ}u$ is smooth in the standard sense. \\

Since we have already shown that ${^{\circ}f}$ is continuous in the standard sense, as the next step we now prove that it is differentiable. For this we observe that, by the same arguments as used for $f$, its first derivative $f':[0,r)\to\IF$ is Lipschitz with limited Lipschitz constant given by ${^*\|f\|_{C^2}}$. For the difference quotient used to establish the Lipschitz continuity, this can be used to prove that $$\Big|\frac{f(y)-f(x)}{y-x}-f'(x)\Big|_{\IF}=|f'(w)-f'(x)|_{\IF}\leq {^*\|f\|_{C^2}}\cdot |y-x|$$ using that $w\in{^*[x,y]}$. It follows that for every standard $\eps>0$ there exists a standard $\delta=\eps/c_2>0$ with $c_2=\max\{{^{\circ}({^*\|f\|_{C^2}})},1\}$ with the property that $$|y-x|<\delta\,\,\textrm{implies}\,\,\Big|\frac{f(y)-f(x)}{y-x}-f'(x)\Big|_{\IF}<\eps.$$ Now using that the near-standardness of $f$ implies that the above difference quotient is near-standard with standard part given by $${^{\circ}\Big(\frac{f(y)-f(x)}{y-x}\Big)}=\frac{{^{\circ}f}(y)-{^{\circ}f}(x)}{y-x},$$ it follows that $$|y-x|<\delta\,\,\textrm{implies}\,\,\Big|\frac{{^{\circ}f}(y)-{^{\circ}f}(x)}{y-x}-f'(x)\Big|_{^*\IH}<\eps.$$ But this proves that ${^{\circ}f}$ is differentiable at $x\in[0,r)$ in the standard sense with derivative given by the standard part of $f'(x)$; in particular, we see a posteriori that $f'$ has to be near-standard itself. \\  

On the other hand, after replacing $f$ by $f'$ and employing the limitedness of ${^*\|f'\|_{C^2}}\leq{^*\|f\|_{C^3}}$, one can successively prove that ${^{\circ}f}$ is infinitely often differentiable, that is, smooth in the standard sense.  Furthermore observe that the latter also proves that the standard derivatives are obtained from the non-standard derivatives by simply taking their standard part. \\

Finally, in order to see that the $\tilde{J}^{\IF}$-holomorphicity of $u$ implies that ${^{\circ}u}: S^2\to S^2\times \IH/\IC$ satisfies $$T{^{\circ}u}+\tilde{J}^{\IH}_t(^{\circ}u)\cdot T{^{\circ}u}\cdot i=0,$$ it just suffices to use corollary \ref{almost-complex}, i.e., the fact that on $S^2\times \IH/\IC\subset {^*S}^2\times \IF/{^*\IC}$ the *-finite-dimensional almost complex structure $\tilde{J}^{\IF}$ agrees with the infinite-dimensional almost complex structure $\tilde{J}^{\IH}$. Indeed the latter holds up to an infinitesimal error, but this however becomes invisible after taking the standard part map. 

\begin{remark} Let us emphasize and clarify the relevance of near-standardness of $u$ in our arguments. In order to get limitedness of the appearing derivatives of $\tilde{J}^{\IF}$ ($g^{\IF}$) we use that, by corollary \ref{almost-complex}, at \emph{near-standard} $u$, the norm of the $\ell$.th derivative of $\tilde{J}^{\IF}$  agrees with the norm of the non-standard extension of the $\ell$.th derivative of $\tilde{J}^{\IH}$ at its standard part $^{\circ}u$ for every $\ell\in\IN$. Since the latter agrees, by the transfer principle, with the norm of the $\ell$.th derivative of $\tilde{J}^{\IH}$ at $^{\circ}u$ in the standard sense, we can use the smoothness of $\tilde{J}^{\IH}$ (in the standard sense) to get limitedness of the derivatives of $\tilde{J}^{\IF}$ (and hence of $g^{\IF}$). The assumption of near-standardness could only be dropped in case that $\tilde{J}^{\IF}\approx\tilde{J}^{^*\IH}$ at all points in ${^*S}^2\times \IF/{^*\IC}$ and the norm of the $\ell$.th derivative of $\tilde{J}^{\IH}$ could be uniformly bounded at all points. Note that both assumptions would hold true when the flow $\phi^{\IH}_t$ could be \emph{uniformly} approximated by finite-dimensional flows. \end{remark}

\section{$J$-holomorphic spheres stay near-standard}

In this section we give the proof of the lemma \ref{stay-nearstandard} that makes precise the claim that the $\tilde{J}^{\IF}_t$-holomorphic sphere continues to stay infinitesimally close to $S^2\times\IH/\IC\subset{^*S}^2\times\IF/{^*\IC}$. Recall that in section 6 we claim that this follows, \emph{very informally speaking}, from the minimal surface property of $\tilde{J}^{\IF}_t$-holomorphic curves together with the fact that the induced Riemannian metric is of split-form $g^{\IF}_t\approx g^{\IH}_t\oplus \<\cdot,\cdot\>$ along $S^2\times \IH/\IC\subset {^*S}^2\times \IF/{^*\IC}$. In this section we make this argument precise using a non-standard version of Liouville's theorem. \\

We prove this crucial result in the following four steps. \\

\noindent\emph{Step 1: $\xi=\xi_t$ solves $(\CR + S^{\IF})\xi=\eta^{\IF}$.} \\

In order to apply Liouville's theorem, we need to bring the Cauchy-Riemann equation into play. Since in the standard finite-dimensional situation the corresponding result is well-known, see e.g. \cite{MDSa}, as already mentioned in the section on non-standard models we can simply again employ the transfer principle to obtain that tangent vector $\xi=\xi_t=\del u_t/\del t$ of the path $t\mapsto u_t$ of $\tilde{J}^{\IF}_t$-holomorphic spheres $u_t: {^*S}^2\to {^*S}^2\times \IF/{^*\IC}$ at a given fixed $t$ satisfies the linear Cauchy Riemann equation $$\nabla^{\IF} \xi + \tilde{J}^{\IF}(u)\cdot\nabla^{\IF}\xi \cdot i+\nabla^{\IF}_{\xi}\tilde{J}^{\IF}(u)\cdot Tu\cdot i \;=\; \nabla^{\IF}_t \tilde{J}^{\IF}(u)\cdot Tu\cdot i$$ with $u:=u_t$, $\tilde{J}^{\IF}:=\tilde{J}^{\IF}_t$ and $\nabla^{\IF}$ denotes the Levi-Civita connection with respect to the Riemannian metric $g^{\IF}={^*\omega}(\cdot,\tilde{J}^{\IF}\cdot)$ on ${^*S}^2\times\IF/{^*\IC}$. In order to keep the arguments simpler, we will actually use that, using a unitary trivialization $\Phi^{\IF}=\Phi^{\IF}_{t,u}: \IF\to u^*T({^*S}^2\times\IF/{^*\IC})$ satisfying $\tilde{J}^{\IF}(u)\cdot\Phi^{\IF}=\Phi^{\IF}\cdot J_0$, this linear Cauchy-Riemann equation can be rewritten in the form $$(\CR+S^{\IF})\xi\;=\;\eta^{\IF}$$ with 
\begin{eqnarray*} 
S^{\IF}&=&(\Phi^{\IF})^{-1}(\nabla^{\IF} \Phi^{\IF} + \tilde{J}^{\IF}(u)\cdot\nabla^{\IF}\Phi^{\IF} \cdot i+\nabla^{\IF}_{\Phi^{\IF}}\tilde{J}^{\IF}(u)\cdot Tu\cdot i), \\
\eta^{\IF}&=&(\Phi^{\IF})^{-1}(\nabla^{\IF}_t\tilde{J}^{\IF}(u)\cdot Tu\cdot i).
\end{eqnarray*} $ $\\

\noindent\emph{Step 2: There exists $T>0$ such that $S^{\IF}\xi_t$ and $\eta^{\IF}_t$ are near-standard for $t<T$.} \\

Define for every $n\in{^*\IN}$ the sequence of *-real numbers $$t_n:=\min\{t\in{^*[0,1]}: \|\xi_t\|_{1,p}\geq n\};$$ note that the minimum actually exists by transfer as the underlying set is again a set in the non-standard model by proposition \ref{internal-definition}. Because of the compactness of $[0,1]$ (in the standard sense), it follows that the monotone sequence of standard parts ${^{\circ}t_n}$, $n\in\IN$ converges to some standard $t_{\infty}\in [0,1]$. Then it directly follows from the definition that $\|\xi_t\|_{1,p}$ and hence also $\|\xi_t\|_{\infty}$ is limited for $t<t_{\infty}$ and we set $T:=t_{\infty}$. Using proposition \ref{limited-infinitesimal} we want to show that the limitedness of $\xi=\xi_t$ (together with the limitedness of the derivative of $u$) implies that $S^{\IF}\xi$ and $\eta^{\IF}$ are not only limited, but even near-standard in the sense that $(S^{\IF}\xi)(z)\in\IF$, $\eta^{\IF}(z)\in\IF$ are near-standard for all $z\in{^*S}^2$. For this let us again denote by $(e_i)_{i=1}^{\dim\IF}$ an orthonormal basis of $\IF$ extending a chosen complete orthonormal basis of $\IH$; further let us denote by $(e_i^*)_{i=1}^{\dim\IF}$ denote the corresponding dual basis of $\IF^*$. After introducing this notation, by proposition \ref{characterization} it is the goal to show that $$\sum_{i=2N+1}^{\dim\IF} \<S^{\IF}\xi,e_i\>^2\approx 0\;\;\textrm{and}\;\;\sum_{i=2N+1}^{\dim\IF} \<\eta^{\IF},e_i\>^2\approx 0$$ for all unlimited $\dim\IF>N\in{^*\IN}\backslash\IN$.\\

First, it follows from the smoothness of $H:\IH\to\IR$ that $$\sum_{i_1,\ldots,i_{k+1}=1}^{\infty} \<T^{k+1}H(^{\circ}u),e_{i_1}^*\otimes\ldots\otimes e_{i_{k+1}}^*\>^2<\infty$$ and hence, equivalently, $$\sum_{i_1,\ldots,i_k,j=1}^{\infty} \<T^k X^{\IH}(^{\circ}u),e_{i_1}^*\otimes\ldots\otimes e_{i_k}^*\otimes e_j\>^2<\infty.$$ Since $T^k\phi^{\IH}_0=0$ for all $k\geq 2$, this immediately gives $$\sum_{i_1,\ldots,i_k,j=1}^{\infty} \<T^k \phi^{\IH}_t(^{\circ}u),e_{i_1}^*\otimes\ldots\otimes e_{i_k}^*\otimes e_j\>^2<\infty$$ for all $k\geq 2$. Using that the almost complex structure $\tilde{J}^{\IH}$ as well as the canonical unitary trivialization $\Phi^{\IH}$ are defined using $\phi^{\IH}_t$, it follows that we in particular get \begin{eqnarray*} &&\sum_{i,j=1}^{\infty}\<\nabla^{\IH}\Phi^{\IH}(^{\circ}u),e_i^*\otimes e_j\>^2<\infty\;\;\textrm{and}\\&&\sum_{i_1,i_2,j=1}^{\infty} \<\nabla^{\IH}_{\Phi^{\IH}}\tilde{J}^{\IH}(^{\circ}u),e_{i_1}^*\otimes e_{i_2}^*\otimes e_j\><\infty.\end{eqnarray*}  

Combining proposition \ref{characterization} with the fact that $\nabla^{\IF}\Phi^{\IF}\approx\nabla^{\IH}\Phi^{\IH}$ and  $\nabla^{\IF}_{\Phi^{\IF}}\tilde{J}^{\IF}(u) \approx \nabla^{\IH}_{\Phi^{\IH}}\tilde{J}^{\IH}(^{\circ}u)$, it follows that \begin{eqnarray*} &&\sum_{j=2N+1}^{\dim\IF}\sum_{i=1}^{\dim\IF}\<\nabla^{\IF}\Phi^{\IF}(u),e_i^*\otimes e_j\>^2\approx 0\;\;\textrm{and}\\ &&\sum_{j=2N+1}^{\dim\IF}\sum_{i_1,i_2=1}^{\dim\IF} \<\nabla^{\IF}_{\Phi^{\IF}}\tilde{J}^{\IF}(u),e_{i_1}^*\otimes e_{i_2}^*\otimes e_j\>\approx 0\end{eqnarray*} for all unlimited $\dim\IF>N\in{^*\IN}\backslash\IN$. \\

But with this it follows with the Cauchy-Schwarz inequality and proposition \ref{limited-infinitesimal} that 
\begin{eqnarray*} 
&&\sum_{j=2N+1}^{\dim\IF}\<\nabla^{\IF}\Phi^{\IF}\cdot\xi,e_j\>^2\\&&\leq\; \Big(\sum_{j=2N+1}^{\dim\IF}\sum_{i=1}^{\dim\IF}\<\nabla^{\IF}\Phi^{\IF},e_i^*\otimes e_j\>^2\Big)\cdot\Big(\sum_{i=1}^{\dim\IF}\<\xi,e_i\>^2\Big)\approx 0,
\end{eqnarray*}
\begin{eqnarray*}
&&\sum_{j=2N+1}^{\dim\IF}\<\nabla^{\IF}_{\Phi^{\IF}\xi}\tilde{J}^{\IF}(u)\cdot Tu\cdot i,e_j\>^2 \\&&\leq\; 
\Big(\sum_{j=2N+1}^{\dim\IF}\sum_{i_1,i_2=1}^{\dim\IF} \<\nabla^{\IF}_{\Phi^{\IF}}\tilde{J}^{\IF}(u),e_{i_1}^*\otimes e_{i_2}^*\otimes e_j\>^2\Big)\cdot\|\xi\|^2\cdot\|Tu\|^2 \approx 0
\end{eqnarray*} 
for all unlimited $\dim\IF>N\in{^*\IN}\backslash\IN$, proving that $\nabla^{\IF}\Phi^{\IF}\cdot\xi$ and $\nabla^{\IF}_{\Phi^{\IF}\xi}\tilde{J}^{\IF}(u)\cdot Tu\cdot i$ are indeed near-standard by proposition \ref{characterization}. Note that by the same arguments we find that $\nabla^{\IF}_t \tilde{J}^{\IF}(u)\cdot Tu\cdot i$ is near-standard. Finally employing that $\tilde{J}^{\IF}(u)\approx\tilde{J}^{\IH}(^{\circ}u)$ and $\Phi^{\IF}\approx\Phi^{\IH}$ map near-standard elements to near-standard elements, it follows that $S^{\IF}\xi$ and $\eta^{\IF}$ are near-standard as desired. \\

\noindent\emph{Step 3: $\xi_t$ is near-standard for $t<T$.} \\

From the fact that $S^{\IF}\xi$ and $\eta^{\IF}$ are near-standard, it immediately follows from $(\CR+S^{\IF})\cdot\xi=\eta^{\IF}$ that $\CR\xi$ must be near-standard itself. We now complete the proof of the lemma by showing that the near-standardness of $\CR\xi$ implies the near-standardness of $\xi$.  \\

To this end, consider some unlimited $\dim\IF>N\in{^*\IN}\backslash\IN$. Defining $\xi_N^{\perp}(z) := \sum_{i=2N+1}^{\dim\IF} \<\xi(z),e_i\>\cdot e_i$, it immediately follows from $J_0 e_{2n+1}=e_{2n+2}$ that $$|\CR\xi_N^{\perp}(z)|^2 = \sum_{i=1}^{\dim\IF}\<\CR\xi_N^{\perp}(z),e_i\>^2=\sum_{i=2N_0+1}^{\dim\IF}\<\CR\xi(z),e_i\>^2\approx 0,$$ where the last identity follows from the characterization of near-standardness in proposition \ref{characterization}. \\

Denoting by $\|\cdot\|_p$, $\|\cdot\|_{\infty}$ the *-extensions of the $L^p$- and the supremum norm, respectively, it follows from transfer that $\|\CR\xi_N^{\perp}\|_p\leq (4\pi)^{1/p}\cdot\|\CR\xi_N^{\perp}\|_{\infty}\approx 0.$ On the other hand, it follows, after applying the transfer principle to the classical regularity estimate for the standard Cauchy-Riemann operator $\CR$, that $\|\xi_N^{\perp}\|_{1,p}\approx 0$, where $\|\cdot\|_{1,p}$ denotes the *-extension of the Sobolev $(1,p)$-norm. Note that here we additionally have to use the fact that $\xi(z_0)=\del\phi^{\IH}_t(0)/\del t\in\IH$ and hence $\xi_N^{\perp}(z_0)\approx 0$. Finally, after applying the transfer principle to the appropriate Sobolev embedding theorem, we indeed obtain $\|\xi_N^{\perp}\|_{\infty}\approx 0$, that is, $\xi_N^{\perp}(z)\approx 0$ for all $z\in{^*S^2}$, meaning that $\xi$ is near-standard.\\ 

Moreover, we see that our proof shows that $\xi_t$ and hence also the holomorphic spheres $u_t$ are not only near-standard in the sense of theorem \ref{holo-spheres} but even in a stronger Sobolev $H^{1,p}$-sense: Instead of just requiring that $\xi_N^{\perp}(z)$, $u^{\perp}_N(z)\approx 0$ for all $z\in{^*S^2}$, that is, $\xi^{\perp}_N$, $u^{\perp}_N\approx 0$ in the supremum norm, we actually get that $\xi^{\perp}_N$, $u^{\perp}_N\approx 0$ in the *-extension of the $H^{1,p}$-norm for all unlimited $\dim\IF>N\in{^*\IN}\backslash\IN$. This completes the proof of lemma \ref{stay-nearstandard}.

\section{Completing the proof}

It remains to be shown that there exists some standard $T>0$ such that $\xi_t(z)\in\IF$ is limited for all $z\in{^*S}^2$. Furthermore we have to insure that the near-standardness of the derivative $\xi_t=\del u_t/\del t$ of the path $t\mapsto u_t$ indeed proves the near-standardness of the path, that is, of $u_t$ for all $t<T$. Finally we show that the path $t\mapsto{^{\circ}u_t}$ is continuous in the $C^0$-sense and we prove the geometrical characterization of the maximal time $T$ in terms of pseudoholomorphic spheres as mentioned in theorem \ref{Main2}.\\

\noindent\emph{Step 1: A Lipschitz-type inequality}\\

The proof that the maximal time $T$ is a positive real number in the standard sense and the latter result about the nearstandardness property follow after having established an Lipschitz inequality for the map $(t,u)\to\xi_{t,u}=(D^{\IF}_{t,u})^{-1}\cdot\eta^{\IF}_{t,u}$ of the following form: There exist standard $\delta>0$ and $c_1,c_2>0$ such that $$\|\xi_{t,u}-\xi_{s,v}\|_{1,p}\,\leq\, c_1\,\|u-v\|_{1,p}\,+\, c_2\,|t-s|$$ whenever $\|u-u_0\|_{1,p}$, $\|v-u_0\|_{1,p}$, $|t|$, $|s|<\delta$ with the unique $\tilde{J}_0$-holomorphic sphere $u_0: S^2\to S^2\times \{0\}$.\\

In order to establish this result, it clearly suffices to prove the corresponding Lipschitz inequality for the maps $(t,u)\mapsto \eta^{\IF}_{t,u}$ and $(t,u)\mapsto S^{\IF}_{t,u}$, that is, there exist further standard constants $c_3$, $c_4$, $c_5$, $c_6>0$ such that $$\|\eta^{\IF}_{t,u}-\eta^{\IF}_{s,v}\|_p\,\leq\, c_3\,\|u-v\|_{1,p}\,+\, c_4\,|t-s|$$ and $$\|S^{\IF}_{t,u}-S^{\IF}_{s,v}\|\,\leq\, c_5\,\|u-v\|_{1,p}\,+\, c_6\,|t-s|$$ whenever $\|u-u_0\|_{1,p}$, $\|v-u_0\|_{1,p}$, $|t|$, $|s|<\delta$.\\

In order to prove the first of the latter two inequalities, note that 
\begin{eqnarray*} 
&&\|\eta^{\IF}_{t,u}-\eta^{\IF}_{s,v}\|_p \,\leq\, \|\nabla^{\IF}_t\tilde{J}^{\IF}_t(u)\cdot Tu-\nabla^{\IF}_t\tilde{J}^{\IF}_s(v)\cdot Tv\|_p \\ &&\leq\, \|\nabla^{\IF}_t\tilde{J}^{\IF}_t(u)\|_{\infty} \|Tu-Tv\|_p + \|\nabla^{\IF}_t\tilde{J}^{\IF}_t(u) -\nabla^{\IF}_t\tilde{J}^{\IF}_s(v)\|_{\infty} \|Tv\|_p. 
\end{eqnarray*}
Assuming that $\|u-u_0\|_{1,p}$, $\|v-u_0\|_{1,p}<\delta$ for some limited $\delta>0$, it first follows that from $\|Tu\|_p\leq \|Tu_0\|_p+\|u-u_0\|_{1,p}$ that $\|Tu\|$ and $\|Tv\|_p$ is limited as $\|Tu_0\|_p=(4\pi)^{1/p}$. Together with $\nabla^{\IF}_t\tilde{J}^{\IF}_t(u)\approx \nabla^{\IH}_t\tilde{J}^{\IH}_t({^{\circ}u})$ and $\nabla^{\IF}_t\tilde{J}^{\IF}_s(v)\approx \nabla^{\IH}_t\tilde{J}^{\IH}_s({^{\circ}v})$, it follows from proposition \ref{limited-infinitesimal} that the sum agrees, up to an infinitesimal error, with $$\|\nabla^{\IH}_t\tilde{J}^{\IH}_t({^{\circ}u})\|_{\infty} \|Tu-Tv\|_p + \|\nabla^{\IH}_t\tilde{J}^{\IH}_t({^{\circ}u}) -\nabla^{\IH}_t\tilde{J}^{\IH}_s({^{\circ}v})\|_{\infty} \|Tv\|_p $$ Since the map $(t,p)\mapsto \nabla^{\IH}_t\tilde{J}^{\IH}_t(p)$ is smooth in the standard sense, there exist some standard $\delta>0$ such that 
\begin{eqnarray*} 
&&\|\nabla^{\IH}_t\tilde{J}^{\IH}_t({^{\circ}u}) -\nabla^{\IH}_t\tilde{J}^{\IH}_s({^{\circ}v})\|_{\infty}\\ &&\leq\,2\|\nabla^{\IH}\nabla^{\IH}_t\tilde{J}^{\IH}_0(u_0)\|_{\infty}\cdot\|{^{\circ}u}-{^{\circ}v}\|_{\infty} + 2\|\nabla^{\IH}_t\nabla^{\IH}_t\tilde{J}^{\IH}_0(u_0)\|_{\infty}\cdot |t-s|
\end{eqnarray*}  
whenever $\|{^{\circ}u}-u_0\|_{\infty}$, $\|{^{\circ}v}-u_0\|_{\infty}$, $|t|$, $|s|<\delta$. Note the constants $\|\nabla^{\IH}\nabla^{\IH}_t\tilde{J}^{\IH}_0(u_0)\|_{\infty}$ and $\|\nabla^{\IH}_t\nabla^{\IH}_t\tilde{J}^{\IH}_0(u_0)\|_{\infty}$ only depend on the Hamiltonian $H$ and its derivatives near the \emph{compact} set $B^2(R)\times\{0\}\subset Z^{\IH}(R)\subset\IH$; in particular they are finite and hence limited. Together with $$\|{^{\circ}u}-u_0\|_{\infty}\approx\|u-u_0\|_{\infty}\leq c\|u-u_0\|_{1,p}$$ it follows that 
\begin{eqnarray*} 
&&\|\nabla^{\IH}_t\tilde{J}^{\IH}_t({^{\circ}u})\|_{\infty} \|Tu-Tv\|_p + \|\nabla^{\IH}_t\tilde{J}^{\IH}_t({^{\circ}u}) -\nabla^{\IH}_t\tilde{J}^{\IH}_s({^{\circ}v})\|_{\infty} \|Tv\|_p\\ &&\leq\, 2  \|\nabla^{\IH}_t\tilde{J}^{\IH}_0(u_0)\|_{\infty}\cdot \|u-v\|_{1,p} \\&&+\, 2\|\nabla^{\IH}\nabla^{\IH}_t\tilde{J}^{\IH}(u_0)\|_{\infty}\cdot c\cdot \|u-v\|_{1,p}\cdot 2\|Tu_0\|_p\\&&+\, 2\|\nabla^{\IH}_t\nabla^{\IH}_t\tilde{J}^{\IH}(u_0)\|_{\infty}\cdot |t-s|\cdot 2\|Tu_0\|\\&&\leq\; c_3\cdot\|u-v\|_{1,p}\,+\, c_4\cdot |t-s|
\end{eqnarray*}
whenever $\|u-u_0\|_{1,p}$, $\|v-u_0\|_{1,p}$, $|t|$, $|s|<\delta$, possibly after making $\delta>0$ smaller but still standard, where the constant $c_3$, $c_4>0$ are defined as 
\begin{eqnarray*}
c_3 &:=& 2 \|\nabla^{\IH}_t\tilde{J}^{\IH}_0(u_0)\|_{\infty}+2\|\nabla^{\IH}\nabla^{\IH}_t\tilde{J}^{\IH}_0(u_0)\|_{\infty}\cdot c\cdot 2\cdot (4\pi)^{1/p},\\ c_4&:=& 2\|\nabla^{\IH}_t\nabla^{\IH}_t\tilde{J}^{\IH}(u_0)\|_{\infty}\cdot  2\cdot (4\pi)^{1/p}.
\end{eqnarray*}
In a similar fashion, one can prove that the constants $c_5$, $c_6>0$ are given by 
\begin{eqnarray*}
c_5 &:=& 2\|\nabla^{\IH}\nabla^{\IH}\Phi^{\IH}_0\|_{\infty} \cdot c+2 \|\nabla^{\IH}\tilde{J}^{\IH}_0(u_0)\|_{\infty}+2\|\nabla^{\IH}\nabla^{\IH}\tilde{J}^{\IH}_0(u_0)\|_{\infty}\cdot c\cdot 2\cdot (4\pi)^{1/p},\\ c_6&:=& 2\|\nabla^{\IH}_t\nabla^{\IH}\Phi^{\IH}_0\|_{\infty} +2\|\nabla^{\IH}_t\nabla^{\IH}\tilde{J}^{\IH}_0(u_0)\|_{\infty}\cdot  2\cdot (4\pi)^{1/p}.
\end{eqnarray*}
In particular, since also the constants $\|\nabla^{\IH}\nabla^{\IH}\tilde{J}^{\IH}(u_0)\|_{\infty}$ and $\|\nabla^{\IH}_t\nabla^{\IH}\tilde{J}^{\IH}(u_0)\|_{\infty}$ as well as $\|\nabla^{\IH}\nabla^{\IH}\Phi^{\IH}_0\|_{\infty}$ and $\|\nabla^{\IH}_t\nabla^{\IH}\Phi^{\IH}_0\|_{\infty}$ only depend on the Hamiltonian $H$ and its derivatives near the \emph{compact} set $B^2(R)\times\{0\}\subset Z^{\IH}(R)\subset\IH$, the finiteness of all constants $c_3,c_4,c_5,c_6$ and hence also $c_1,c_2$ does \emph{not} require extra uniform bounds on the derivatives of $H$. \\

\noindent\emph{Step 2: $C^0$-continuity and geometrical characterization of the maximal time $T$}\\

After we have established the existence of a one-dimensional family $t\mapsto {^{\circ}u_t}$ of $\tilde{J}^{\IH}_t$-holomorphic spheres, $t\in [0,T)$, it first remains to prove that the family is indeed continous in the standard sense. As in step 2 in section 6 we observe that the map $t\mapsto u_t$ is differentiable in the non-standard sense with limited derivative $\xi_t=\del u_t/\del t$. More precisely, it follows that $\|u_{t'}-u_t\|_{1,p}\leq n |t'-t|$ as long as $t',t\in [0,t_n]$ for all $n\in\IN$. After applying the transfer principle to the Sobolev embedding theorem, it follows that for every $n\in\IN$ there exists some standard constant $d_n>0$ such that for the *-extension of the supremum norm we get $\|u_{t'}-u_t\|_{\infty}\leq d_n |t'-t|$ and hence $\|{^{\circ}u}_{t'}-{^{\circ}u}_t\|_{\infty}\leq d_n |t'-t|$, that is, the family $t\mapsto u_t$ is continuous in the $C^0$-sense. \\

It remains to prove the geometrical characterization of the maximal time $T$ given in theorem \ref{Main2}. For this consider the sequence $t_n$, $n\in{^*\IN}$ defined in step 2 of the last section and denote by $u_n=u_{t_n}$, $n\in{^*\IN}$ the corresponding sequence of $\tilde{J}^{\IF}_{t_n}$-holomorphic spheres in ${^*S^2}\times\IF/{^*\IC}$. Following the given two alternatives in theorem \ref{Main2}, we may assume that the corresponding standard sequence ${^{\circ}u_n}$, $n\in\IN$ of $\tilde{J}^{\IH}_{^{\circ}t_n}$-holomorphic spheres has a $C^0$-convergent subsequence converging to a map $u_{\infty}: S^2\to S^2\times \IH/\IC$. \\

By saturation it follows that there must exist some unlimited $N\in{^*\IN}\backslash\IN$ such that $t_N\approx t_{\infty}=T$ and $u_N\approx u_{\infty}$: Indeed, define $$A_{\eps}=\{n\in{^*\IN}: \|u_n-u_{\infty}\|_{\infty},|t_n-t_{\infty}|<\eps\}.$$ Since for any finite collection $\eps_1,\ldots,\eps_k\in\IR^+$ we know that $A_{\eps_1}\cap\ldots\cap A_{\eps_k}\neq\emptyset$, it follows by the saturation principle that there exists $N\in\bigcap_{\eps} A_{\eps}\neq\emptyset$. Since $u_N$ is a $\tilde{J}^{\IF}_{t_N}$-holomorphic sphere and near-standard, it follows from lemma \ref{no-bubbling} that $u_{\infty}={^{\circ}u_N}$ is a $J^{\IH}_T$-holomorphic sphere in the standard sense. \emph{In particular, it follows from our non-standard bubbling-off argument that the $C^0$-limit $u_{\infty}$ is indeed smooth in the standard sense.} Furthermore, by step 2 in the last section we have that $\eta^{\IF}_{t_N}$ is nearstandard and hence limited. On the other hand, since $\|\xi_{t_N}\|_{1,p}=N$ is unlimited, it follows that the operator norm of the inverse operator $(D^{\IF}_{t_N})^{-1}$ of the linearized Cauchy-Riemann operator $D^{\IF}=D^{\IF}_{t_N}$ at $u_N$ must be unlimited. Since $D^{\IF}=\CR+S^{\IF}\approx \CR+S^{\IH}_{t_{\infty}}=D^{\IH}$ with 
\begin{eqnarray*} 
S^{\IF}&=&(\Phi^{\IF})^{-1}(\nabla^{\IF} \Phi^{\IF} + \tilde{J}^{\IF}(u_N)\cdot\nabla^{\IF}\Phi^{\IF} \cdot i+\nabla^{\IF}_{\Phi^{\IF}}\tilde{J}^{\IF}(u_N)\cdot Tu_N\cdot i), \\
S^{\IH}&=&(\Phi^{\IH})^{-1}(\nabla^{\IH} \Phi^{\IH} + \tilde{J}^{\IH}(u_{\infty})\cdot\nabla^{\IH}\Phi^{\IH} \cdot i+\nabla^{\IH}_{\Phi^{\IH}}\tilde{J}^{\IH}(u_{\infty})\cdot Tu_{\infty}\cdot i),
\end{eqnarray*}
it follows that the linearization of the Cauchy-Riemann operator at $u_{\infty}$ cannot have a bounded inverse.

\end{document}